\newtheorem{thm}{Theorem}[section]
\newtheorem{lem}[thm]{Lemma}
\newtheorem{conj}[thm]{Conjecture}
\theoremstyle{definition}
\newtheorem{defn}[thm]{Definition}
\newtheorem{example}[thm]{Example}
\theoremstyle{remark}
\newtheorem{rem}[thm]{Remark}
\numberwithin{equation}{section}
\title{Deligne--Illusie Classes as Arithmetic Kodaira--Spencer Classes}
\author{Taylor Dupuy}%ducky
\address{Taylor Dupuy, Department of Mathematics \& Statistics, University of Vermont}
\email{taylor.dupuy@uvm.edu}
\author{David Zureick-Brown}
\address{David Zureick-Brown, Dept. of Math and CS, Emory University, 400 Dowman
 Dr., W401, Atlanta, GA 30322, USA}
\email{dzb@mathcs.emory.edu}
\newcommand{\defi}[1]{\textsf{#1}} 				% for defined terms
 \newcommand{\Qbar}{\overline{\Q}}
\def\presuper#1#2%
\newcommand{\GSp}{\operatorname{GSp}}
\newcommand{\DI}{\operatorname{DI}}
\newcommand{\KS}{\operatorname{KS}}
\newcommand{\OO}{\mathcal{O}}
\newcommand{\Def}{\operatorname{Def}}
\newcommand{\id}{\operatorname{id}}
\newcommand{\FDer}{\operatorname{FDer}}
\newcommand{\CRing}{\mathsf{CRing}}
\newcommand{\Spec}{\operatorname{Spec}}
\newcommand{\eps}{\varepsilon}
\newcommand{\Der}{\operatorname{Der}}
\newcommand{\pDer}{\pi\operatorname{-Der}}
\newcommand{\pr}{\operatorname{pr}}
\newcommand{\Z}{\mathbf{Z}}
\newcommand{\Fbar}{\overline{\mathbf{F}}}
\newcommand{\Jac}{\mathrm{Jac}}
\newcommand{\smCM}{\mathrm{smCM}}
\newcommand{\C}{\mathbf{C}}
\newcommand{\Q}{\mathbf{Q}}
\newcommand{\End}{\operatorname{End}}
\newcommand{\DefAS}{\operatorname{DefAS}}
\newcommand{\G}{\mathbf{G}}
\newcommand{\Hom}{\operatorname{Hom}}
\newcommand{\mm}{\frak{m}}
\newcommand{\qCL}{\operatorname{qCL}}
\newcommand{\FT}{FT}
\newcommand{\NN}{\mathbf{N}}
\newcommand{\Aut}{\operatorname{Aut}}
\newcommand{\ZZ}{\mathbf{Z}}
\newcommand{\Spf}{\operatorname{Spf}}
\renewcommand{\AA}{\mathbf{A}}
\newcommand{\Ext}{\operatorname{Ext}}
\newcommand{\FF}{\mathbf{F}}
\newcommand{\AL}{\operatorname{AL}}
\newcommand{\Autu}{\underline{\operatorname{Aut}}}
\newcommand{\pp}{\frak{p}}
\newcommand{\pfrak}{\frak{p}}
\begin{document}

%\tableofcontents
%\newpage

\maketitle

\begin{abstract}
	Faltings showed that ``arithmetic Kodaira--Spencer classes'' satisfying a certain compatibility axiom cannot exist.
	By modifying his definitions slightly, we show that the Deligne--Illusie classes satisfy what could be considered an ``arithmetic Kodaira--Spencer'' compatibility condition. 
	
	Afterwards we discuss a ``wittfinitesimal Torelli problem'' and its relation to CM Jacobians.
\end{abstract}

\section{Introduction}
The abstract of the paper ``Does there exist an Arithmetic Kodaira--Spencer class?" \cite{Faltings1999} is the following: ``We show that an analog of the Kodaira--Spencer class for curves over number-fields cannot exist.'' 
In the present paper we show that if we modify the axioms in \cite{Faltings1999} slightly such classes can exist; motivated by work of Buium and by work of Mochizuki, we give a candidate for such a class and discuss an application.
\begin{rem}
	The term ``arithmetic Kodaira--Spencer class'' is vague and the definition varies from paper to paper.
	In this paper we use 
	%two notions, 
	the Deligne--Illusie class (see \S \ref{S:deligne-illusie})%and the Buium class (\S \ref{S:lifted})
	. 
	More distinct ``arithmetic Kodaira--Spencer theory'' can be found in \cite{Dupuy2014a}, \cite{Buium2005} and \cite[\S 1.4]{Mochizuki2002}. %Definition 8.50
\end{rem}

We recall the setup of \cite{Faltings1999}. For schemes $S$ and $X$ of finite type over a base $B$ and a smooth map of $B$-schemes $\pi\colon X \to S$, we have an exact sequence 
\begin{equation}\label{E:rel-tan-seq}
0 \to \pi^*(\Omega_{S/B}) \to \Omega_{X/B} \to \Omega_{X/S} \to 0
\end{equation}
giving rise to a class $\kappa(X) \in \Ext^1(\Omega_{X/S},\pi^*\Omega_{S/B}) = H^1(X,T_{X/S}\otimes \pi^*\Omega_{S/B})$ which \cite{Faltings1999} calls the Kodaira--Spencer class. This induces the Kodaira--Spencer map $\KS_{\pi}\colon T_{S/B} \to R^1\pi_*T_{X/S}$. 
Such classes are important for many diophantine reasons and we refer the reader to \cite{Faltings1999} for a discussion.

The problem observed in \cite{Faltings1999} (and elsewhere) is that if $S$ is the spectrum of the ring of integers of a number field then there are no derivations and hence the Kodaira--Spencer map doesn't make sense.\footnote{Actually, $\Omega_{\OO_K/\ZZ}$ exists and the annhilator is the different, which controls ramification. 
This means for all but finitely many primes its localization will be zero. The theory we give presently gives something for unramified primes. } Although no map can exist, it is (a priori) possible for extensions corresponding to \eqref{E:rel-tan-seq} to exist in a canonical way (they don't as Faltings observes). 
For such extension classes to be canonical  \cite{Faltings1999} posits
that for morphisms $f\colon X \to Y$ of smooth $S$-schemes,
``Kodaira--Spencer classes with values in $\omega$'' (where $\omega =
\Omega_{S/B}$) should satisfy
\begin{equation}\label{E:faltings-compat}
  f^*(\kappa(Y)) = df_*(\kappa(X)) \in H^1\left(X, f^* \left( T_{Y/S} \otimes \omega\right)\right).
\end{equation}
Although \cite{Faltings1999} shows no such classes may exist, we show (using Buium's ``wittferential algebra'' \cite{Buium2005}, which formalizes the analogy between Witt vectors and power series) that there exist classes $\DI_{X_1/S_1}(\delta) \in H^1(X_0,F^*T_{X_0})$ which we call ``Deligne--Illusie classes'', and which satisfy a condition similar to \eqref{E:faltings-compat}. 
Here subscripts $n$ denote a reduction modulo $p^{n+1}$ and the reciepient sheaf here is the Frobenius tangent sheaf, whose local sections are Frobenius semi-linear derivations. 
The name stems from their implicit use in \cite{Deligne1987}. We show the following.
\begin{thm}
	 For a morphism $f\colon X \to Y$ of smooth $p$-formal schemes over $S = \Spf \ZZ_p$ which is either smooth or a closed immersion we have 
	\begin{equation}\label{E:di-compatibility}
	f^*\DI_{Y_1/S_1}(\delta) = df_* \DI_{X_1/S_1}(\delta) \in H^1(X_0, F^*_{X_0}T_{X_0} ).
	\end{equation}
\end{thm}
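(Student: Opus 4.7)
The plan is to argue \v{C}ech-locally via suitable choices of Frobenius lifts. Recall that $\DI_{X_1/S_1}(\delta)$ is represented by choosing an affine open cover $\{U_i\}$ of $X_1$ together with lifts $\phi_i \colon U_i \to U_i$ of the absolute Frobenius on the mod-$p$ reduction (which exist by formal smoothness of $X/S$ given the chosen $\delta$ on $S_1$); on double overlaps the Frobenius-semilinear derivations $(\phi_{i'} - \phi_i)/p \in F^*_{X_0} T_{X_0}(U_{ii'})$ then form a \v{C}ech $1$-cocycle representing the class.

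First I would fix an affine open cover $\{V_j\}$ of $Y_1$ with Frobenius lifts $\psi_j$, together with an affine open cover $\{U_i\}$ of $X_1$ and an index map $i \mapsto j(i)$ such that $f(U_i) \subset V_{j(i)}$. The crucial technical step is to arrange, after possibly refining the covers, the matching condition
\begin{equation*}
f \circ \phi_i \;=\; \psi_{j(i)} \circ f|_{U_i} \colon U_i \longrightarrow Y_1.
\end{equation*}
In the \emph{smooth} case, this is a standard deformation-theoretic lifting problem: the map $\psi_{j(i)} \circ f|_{U_i}$ reduces mod $p$ to $f_0 \circ F_{X_0}|_{U_i \bmod p}$, and formal smoothness of $f$ produces Zariski-locally on $U_i$ a lift $\phi_i \colon U_i \to X_1$ along the square-zero thickening $U_i \bmod p \hookrightarrow U_i$ fulfilling the matching condition. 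In the \emph{closed immersion} case, I would instead start with arbitrary Frobenius lifts $\phi_i$ on $U_i := X_1 \cap V_{j(i)}$ and $\psi_{j(i)}$ on $V_{j(i)}$; the two maps $\psi_{j(i)}|_{U_i}$ and $f \circ \phi_i$ from $U_i$ to $Y_1$ both reduce mod $p$ to $F_{Y_0} \circ f_0|_{U_i \bmod p}$, so they differ by $p$ times a Frobenius-semilinear derivation on $U_i \bmod p$; extending this derivation to a section of $F^*_{Y_0} T_{Y_0}$ over the affine $V_{j(i)} \bmod p$ (possible by quasicoherence) and subtracting $p$ times the extension from $\psi_{j(i)}$ produces a new Frobenius lift on $V_{j(i)}$ matching $f \circ \phi_i$ on $U_i$.

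Once the matching condition holds, on double overlaps $U_{ii'}$,
\begin{equation*}
df \circ \frac{\phi_{i'} - \phi_i}{p} \;=\; \frac{f \phi_{i'} - f \phi_i}{p} \;=\; \frac{\psi_{j(i')} f - \psi_{j(i)} f}{p} \;=\; f^{\#}\!\left(\frac{\psi_{j(i')} - \psi_{j(i)}}{p}\right),
\end{equation*}
so $df_* \DI_{X_1/S_1}(\delta)$ and $f^* \DI_{Y_1/S_1}(\delta)$ agree as \v{C}ech $1$-cocycles over $\{U_i\}$; independence of the $\DI$ class from the choices of covers and of auxiliary Frobenius lifts then promotes this to the equality \eqref{E:di-compatibility} in cohomology.

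The main obstacle is arranging the matching condition. The lifting arguments are standard but only produce lifts Zariski-locally, so one must refine the covers carefully without disrupting the cocycle identification, and verify in each case that the resulting $\phi_i$ (resp.\ adjusted $\psi_{j(i)}$) is still a bona fide Frobenius lift. The dichotomy between the smooth and closed immersion cases is essential: without one of these hypotheses on $f$ there is no reason for compatible local Frobenius lifts to exist, and the matching condition genuinely fails in general --- this failure is the fundamental phenomenon Faltings exploited in the classical setting.
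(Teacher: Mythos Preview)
Your proposal is correct and rests on the same core mechanism as the paper: produce, on a suitable affine cover, local Frobenius lifts on $X$ and $Y$ intertwined by $f$ (what the paper calls \emph{local Frobenius compatibility data}), and then read off the equality of \v{C}ech cocycles. That step is exactly the paper's Lemma on ``Frobenius compatible implies Deligne--Illusie compatible.'' For closed immersions your argument is essentially identical to the paper's (both use surjectivity of $\OO(V_{j(i)})\to\OO(U_i)$ to extend; the paper phrases this through trivializations $J^1(X)\cong X\times\AA^n$, you phrase it as extending an $F$-derivation).

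For smooth $f$ the two approaches genuinely diverge. The paper does not use smoothness of $f$ directly: it factors $f$ Zariski-locally as an \'etale map followed by a projection $\AA^n_Y\to Y$, invokes the $\pi$-jet identities $J^1(X)\cong X\times_Y J^1(Y)$ (\'etale case) and $J^1(\AA^n_Y)\cong J^1(Y)\times J^1(\AA^n)$ (projection case) to build compatible lifts for each factor, and then --- because the factorization only exists locally --- globalizes via the low-degree exact sequence of the \v{C}ech-to-derived spectral sequence. You instead apply the infinitesimal lifting criterion for $f$ itself: given $\psi_{j(i)}$ on $V_{j(i)}$ and the mod-$p$ Frobenius on $U_{i,0}$, smoothness of $f$ furnishes $\phi_i$ on the affine $U_i$ with $f\circ\phi_i=\psi_{j(i)}\circ f$ in one step. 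This is cleaner: it avoids both the jet-space machinery and the spectral-sequence passage, since your compatible lifts already live on a global cover of $X$ and the cocycle identity holds on the nose. The paper's route, in turn, makes the link to Buium's $\pi$-jet spaces explicit and sets up the framework used elsewhere in the paper.
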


In section  \ref{S:notation} we give the analogies with the Kodaira--Spencer map, and we prove \eqref{E:di-compatibility} in section \ref{S:proofs}. 
In a separate paper we study the vector bundles coming from these extensions \cite{Dupuy2017}. 

Given the compatibility \eqref{E:compatibility} one may investigate the information this compatibility gives us in terms of (say) a map between a curve and its Jacobian. This leads to some interesting problems. In section \ref{S:applications} we investigate the ``wittfinitesimal Torelli problem'', which is the analogue the Torelli problem in our setting. This problem is related to Coleman's Conjecture concerning the finiteness of the number of CM Jacobians for genus bigger than 8. 
Let $A$ be an abelian variety over $\C$ of dimension $g$. Recall that every abelian variety with sufficiently many complex multiplications (see Definition \ref{def:CM}) can be defined over a number field; we define the \defi{field of moduli} of $A$ to be the intersection of all number fields over which $A$ is defined. Furthermore, every abelian variety with sufficiently many CM's has potentially good reduction. In what follows we will let $\DI^m(X_1)$ denote the obstruction to lifting an $m$th power of the Frobenius modulo $\pi^2$, where $\pi$ is a uniformizer in some finite extension of a full ring of $p$-typical witt vectors over a subfield of an algebraic closure of the field with $p$ elements.

\begin{lem}
\label{L:CM-field}
Let $C/\C$ be a pointed curve of genus $g$ and let $j\colon C \to A$ be its Abel--Jacobi map. Suppose that  $A$ is simple and let  $\Theta$ be the corresponding principal polarization on $A$. 
Fix a prime $p \neq 2$.  Then there exists some natural number $m = m(d,p,g)$ such that  
 $$dj_* \DI^m(C_1) \neq 0$$ 
implies that $A$ does not have a principally polarized CM structure $(A,\Theta,j)$ whose field of moduli is of degree less than $d$ over $\Q$.
\end{lem}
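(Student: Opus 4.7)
The plan is to establish the contrapositive: if $A$ admits a principally polarized CM structure $(A,\Theta,j)$ with field of moduli $K$ of degree less than $d$ over $\Q$, then $\DI^m(A_1) = 0$ for some $m = m(d,p,g)$, from which the vanishing of $dj_* \DI^m(C_1)$ follows immediately. Indeed, for $g \geq 1$ the Abel--Jacobi map $j\colon C \to A$ is a closed immersion, so the compatibility theorem applies (in its $m$-th Frobenius analogue) and yields
\[
dj_* \DI^m(C_1) = j^* \DI^m(A_1).
\]
Thus it suffices to produce the uniform vanishing on the $A$-side.

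To do this I invoke CM theory. Since $A$ is CM of dimension $g$, by standard results (potentially good reduction and the abelian nature of the $\ell$-adic representations attached to $A$) there is a finite extension $L/K$, of degree bounded by a function of $g$ and $p$ alone, such that $A$ extends to an abelian scheme $\calX$ over $\OO_L$ and every endomorphism of the special fiber $\bar\calX$ is the reduction of an endomorphism of $\calX/\OO_L$. Consequently $[L:\Q] \leq c(g,p)\cdot d$, and for any prime $\mathfrak{p} \mid p$ of $\OO_L$ the residue cardinality is $q = p^f$ with $f \leq c(g,p)\cdot d$.

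The Shimura--Taniyama formula identifies the $q$-Frobenius of $\bar\calX$ as an element of the CM algebra, so it lifts to an endomorphism $\tilde\phi \in \End(\calX)$. Since $F_{\bar\calX}^f$ \emph{is} the $q$-Frobenius, reducing $\tilde\phi$ modulo $\pi^2$ exhibits $\DI^f(A_1) = 0$; composing $\tilde\phi$ with itself then shows $\DI^{kf}(A_1) = 0$ for every $k \geq 1$. Taking $m$ to be any common multiple of the integers up to the uniform bound on $f$ (for instance $m = \lfloor c(g,p)\cdot d \rfloor !$) therefore yields $\DI^m(A_1) = 0$ uniformly, completing the contrapositive.

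The main obstacle is producing the explicit uniform bound on $[L:K]$ and hence on $f$ purely in terms of $d, p, g$: one must control, from CM-theoretic data alone, both the field over which $A$ acquires good reduction at primes above $p$ and the field over which the CM endomorphisms---and in particular the $q$-Frobenius---are defined. Both bounds ultimately rest on the abelianness of the $\ell$-adic representations attached to $A$ (finite image of inertia, and endomorphisms controlled by torsion data of bounded conductor), but extracting the precise $(d,p,g)$-dependence is the technical heart of the argument.
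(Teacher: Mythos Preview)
Your approach is correct and matches the paper's: both prove the contrapositive via the compatibility $dj_*\DI^m(C_1)=j^*\DI^m(A_1)$, bound the residue degree $f$ at a prime above $p$ in terms of $d,g,p$ using CM theory (the paper makes this explicit as $f\le 2g\cdot e(g,p)\cdot d$ with $e(g,p)=\#\GSp_{2g}(\FF_\ell)$ for $\ell\in\{5,7\}$, coming from the torsion field over which good reduction is acquired), lift the $p^f$-Frobenius out of the CM algebra, and take $m$ to be a common multiple of the possible $f$'s (the paper uses the lcm, you the factorial). One caveat: your intermediate claim that \emph{every} endomorphism of the special fiber lifts to $\calX/\OO_L$ is false in general (e.g.\ supersingular reduction), but your argument does not actually use it---the Shimura--Taniyama step that follows correctly lifts only the Frobenius, which is all that is needed and is exactly what the paper's center-of-endomorphism-algebra argument establishes.
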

This proof of this proceeds by considering how lifts of the $q$-Frobenius are related to complex multiplication (\S\ref{S:applications}).

\section*{Acknowledgements}
We would like to thank Piotr Achinger, Jeff Achter, Alexandru Buium, Lance Gurney, Ehud Hrushovski, Eric Katz, Joe Rabinoff, Damian R\"{o}ssler, Ehud de Shalit, Ari Shnidman, Dinesh Thakur, and Christelle Vincent, for enlightening conversations. We especially thank Piotr Achinger for pointing out an alternative approach to this proof via \cite[Lemma 3.3.3]{achinger2017liftability}. 
This paper started during the first author's visit to the 2014 Spring Semester at MSRI on Model Theory. The first author was supported by the European Reseach Council under the European Union’s Seventh Framework Programme (FP7/2007-2013)/ ERC Grant agreement no. 291111/MODAG and the second author was supported by NSF CAREER Award (DMS-1555048).

\section{Notation and analogies}
\label{S:notation}

\subsection{Classical derivations/differentiation, and $\pi$-derivations/wittferentiation}
Let $\CRing$ denote the category of commutative rings. 
For $R \in \CRing$ we let $\CRing_R$ denote the category of $R$-algebras. 

Let $A\in \CRing$ and $B\in \CRing_A$.
We have a correspondence between the module of derivations $\partial_f\colon A \to B$, which we denote by $\Der(A,B)$, and functions $f\colon A \to B[\eps]/(\eps^2)$ given by 
$$\xymatrix{
% A \ar[r]^f\ar[dr]_{\mbox{alg map}} & B[\eps]/(\eps^2)  \ar[d]^{\pr_1} \\
A \ar[r]^f\ar[dr]_{\partial_f} & B[\eps]/(\eps^2)  \ar[d]^{\pr_1} \\
& B  	
}$$
where $\pr_i\colon B[\eps]/(\eps^2) \to B$ are given by $\pr_0(a+\eps b) = a$ and $\pr_1(a+\eps b) = b$. 
The map from the collection of such $f$'s to the collection of derivations is given by 
 $$ f \mapsto \partial_f = \pr_1 \circ f. $$
If $X$ is a scheme over a ring $R$, we will let $\Der(\OO_X/R)$ denote the sheaf of $R$ linear derivations on $X$; this sheaf is isomorphic to $T_{X/R}$.
% \david{$T_{X/K}$ really is the dual of the sheaf of differentials, right?} \taylor{Yes. Maps from differentials out are also derivations. }

Now for the arithmetic version.  The idea in what follows is to replace $B \mapsto B[\eps]/(\eps^2)$ with other ring schemes to get ``new derivations''. In the same way that derivations are in correspondence with maps to the ring of dual numbers, $\pi$-derivations are defined via maps to rings of truncated witt vectors of length two.

Let $R$ be a finite extension of $\ZZ_p$ with uniformizer $\pi\in R$. Let $q$ denote the cardinality of the residue field of $R$. For an $R$-algebra $A$ we define $W_{\pi,1}(A)$ to be the set $A\times A$ with addition and multiplication rules given by 
 $$ (a_0,a_1) (b_0,b_1) =  (a_0b_0, a_1b_0^q + b_1 a_0^q + \pi a_1b_1), $$
 $$ (a_0,a_1)+(b_0,b_1) = \left(a_0+b_0, a_1+b_1 - \frac{1}{\pi} \sum_{j=1}^{q-1} { q \choose j} a_0^{q-j}b_0^j \right);$$
these are the so-called \defi{ramified witt vectors of length two}.  When the $\pi$ is understood we will just denote this ring by $W_1$. 
% \david{Remind me in person how this differs from the usual Witt vectors of length two}
% \taylor{It does not. But there is a version where you can tensor-up: $R \otimes_{\Z_p} W_{p,1}$, this is not the same.  }

Let $A \in \CRing_R$ and $B\in \CRing_A$, with structure map $g \colon A \to B$. We define a $\pi$-\defi{derivation} to be a function $\delta\colon A \to B$ such that the map 
\[
f\colon A \to W_1(B), \, x \mapsto f(x) := (g(x), \delta(x))
\]
is a ring homomorphism. Given a ring homomorphism $f\colon A \to W_1(B)$, the composition  $\delta_f = \pr_1\circ f$ is a $\pi$-derivation. From the sum and product rules for Witt vectors we may derive the sum, product and identity rules for $p$-derivations.  We denote the collection of $\pi$-\defi{derivations} from $\delta\colon A \to B$ by $\pDer(A,B)$.

\begin{example}
	In the examples below we will let $\pi=p$ a rational prime. 
	\begin{enumerate}
		\item $\delta\colon \Z_p \to \Z_p$ given by $\delta(x) = (x-x^p)/p$;
		\item $\delta\colon  \Z/p^2 \to \Z/p$ given by the same formula. 
		Note now that division by $p$ is a map $p \Z/p^2 \to \Z/p$. 
	\end{enumerate}
\end{example}

Finally note that given a $\pi$-derivation $\delta$, the map $\phi(x) = x^{q} + \pi\delta(x)$ is a lift of the Frobenius (a ring homomorphism whose reduction modulo $\pi$ coincides with a $q$th power map). 

\subsection{Notation for reductions mod powers of primes}
\label{sec:notat-reduct-mod}
% Let $R$ be a finite extension of $W_{p,\infty}(k)$ where $k$ is a subfield of $\FF_p^{\alg}$. 
% Let $\pi$ be a uniformizer of $R$. \david{And $q$ is? }
We start with a field $K$ of characteristic zero, complete under a discrete valuation $v$, with residue field $k$ of characteristic $p > 0$.  We assume $v$ is normalized such that $v(K^{\times}) = \Z$ and we denote by $e := v(p)$ the absolute ramification index.  Let $R$ be the valuation ring of $K$.  Assume now that we are given a prime element $\pi \in R$ which is algebraic over $\Q_p$. Having fixed $K$ and $\pi$ as above we shall define a map $\delta\colon R \to R$ which will play the role of a ``derivation with respect to $\pi$''.  Let $q$ be the cardinality of the residue field of $\Q_p(\pi)$. Then, by standard local field theory, there exists a ring automorphism $\phi\colon R \to R$ that lifts the Frobenius automorphism $F\colon k \to k$, $F(x) := x^q$.
We define the map $\delta \colon R \to R$ by the formula 
$$\delta(x) = \frac{\phi(x) - x^q}{\pi}$$ 
for $x \in R$ We shall usually write $x',x'',\ldots, x^{(n)}$ in place of $\delta(x), \delta^2(x),\ldots, \delta^n(x)$.

There exists a unique lift of the Frobenius $\phi = \phi_{R,\pi}$ which acts as $\phi(\zeta_n) = \zeta_n^q$ (for $(n,q) = 1$) and satisfies $\phi(\pi) = \pi$. We will let 
 $$ R_n = R/\pi^{n+1} $$
and for $X/R$ a scheme we let 
 $$ X_n = X \otimes R_n = X \text{ mod } \pi^{n+1}. $$

\subsection{Absolute and relative Frobenius}
For $X_0/S$ a scheme over a base $S$ of characteristic $p$ we will let $F_{X_0} = F_{X_0,q}$ denote the absolute Frobenius and $F_{X_0/S} = F_{X_0/S,q}$ denote the relative Frobenius. 
They fit into a diagram
 $$
 \xymatrix{ X_0 \ar[dr]^{F_{X_0/k}} \ar@/^2pc/[drr]^{F_{X_0}} & & \\
 &X_0^{(q)} \ar[r] \ar[d] & X_0 \ar[d] \\
 & S \ar[r]^{F_S} & S.
 }
$$

Here $X_0^{(q)} = X_0 \times_{S, F_S} S $ is the Frobenius twist of $X_0$, which is just the pullback of $X_0$ by the Frobenius on the base.
In terms of equations, we simply raise to $q$th power the coefficients of the defining equations of $X_0$.  On sections we have $F_{X_0}^{\#}(f) = f^q$ and $F_S^{\#}(a) = a^q$. When no confusion arises, we may just denote a Frobenius as $F$. 

Let $X$ and $X'$ be schemes or $\pi$-formal schemes over $R$ which lift $X_0$. 
A \defi{lift of the Frobenius} is a morphism 
$$\phi\colon  X \to X' $$
such that $\phi\otimes_R R/\pi \cong F_{X_0}$. 

\subsection{Frobenius derivations}
For $X_0$ a scheme over a field $k$ of characteristic $p$ we define the sheaf $\FDer(\OO_{X_0}) $ of \defi{Frobenius semi-linear derivations} or \defi{$F$-derivations} to be $\FDer(\OO_{X_0})  := F_{X_0}^*T_{X_0/k}$; note that these can be either the $p$-Frobenius or a $p^a$-Frobenius depending on the context. 
It follows directly from the definition that local section $D$ has the property that $D$ acts as
 $$D(xy) = x^qD(y) + D(x) y^q,$$
where $x$ and $y$ are local sections of $\OO_{X_0}$. 

% \david{Help me check that this follows from the definitions}
% \taylor{ 
% Here is a starting point: in the paper of Deligne-Illusie they use that 
%  $$\Hom_{\OO_{X'}}(\Omega^1_{X'/k},F_*\OO_X)$$
% is the sheaf of $D: \OO_{X'} \to F_*\OO_X$ where we have
%  $$ D(ab) = D(a)F_r^{\#}(b) + F_r^{\#}(a) D(b) $$
% where $F=F_r$.
% One needs to use that $F_{abs} = G \circ F_{rel}$. 
% I believe to get the full derivations you just have to compose with $G^{\#}$ where $G$ is the map that I removed the label from from $X' \to X$. 
	
% Let $f:X \to Y$ be a morphism over $k$. 
% Then $\Hom_{\OO_{Y}}(\Omega^1_{Y/k}, f_*\OO_X)$ are the $f^{\#}$-linear derivations. 

% Let me do this during our department meeting and write this out when I get back. 
% Or if you get to this first we can fix it. 
% }

\subsection{Deligne--Illusie classes}\label{S:deligne-illusie}
Let $X/R$ be a smooth scheme. 
As in the above setup, let $\delta\colon R \to R$ be the unique $\pi$-derivation such that the induced Frobenius fixes a chosen uniformizer $\pi$. 
We define the \defi{Deligne--Illusie class} to be the \u{C}ech cohomology class
 $$ \DI_{X_1/R_1}(\delta) = [\delta_i - \delta_j \mod \pi] \in H^1(X_0, F_{X_0}^*T_{X_0/k}) $$
where $\delta_i\colon  \OO_{U_{i,1}} \to \OO_{U_{i,0}}$ are local prolongations of $p$-derivations on the base and $(U_{i} \to X)_{i\in I}$ is a cover by Zariski affine opens with lifts of the $\pi$-derivations.
Such lifts exist locally due the the infinitesimal lifting property.
See, for example, \cite[Lemma 1.3]{Buium1995}. 
When the derivation on the base $R$ is understood we will use the notation
 $$ \DI_{X_1/R_1}(\delta) = \DI(X_1). $$
When we want to signify that $\DI(X_1)$ is an obstruction to lifting the $m$th power Frobenius we use the notation $\DI^m(X_1)$.

\subsection{Kodaira--Spencer classes and three properties of Kodaira--Spencer classes}
Let $X/K$ be a smooth projective variety. 
Let $\partial_K\in \Der(K)$ be a derivation on the base. 
Let $(U_i\to X)_{i\in I}$ be a cover by Zariski opens. 
The \defi{Kodaira--Spencer class} is defined by 
 $$ \KS_{X/K}(\partial_K) = [\partial_i-\partial_j] \in H^1(X,T_{X/K}) $$
where $\partial_i \in \Der(\OO_X(U_i))$ are prolongations of the derivation on the base: $\partial_i\vert_K = \partial_K$. We present three properties which will have arithmetic analogs. 
\subsubsection{Property 1: Representability of sheaf of prolongations of derivations}
The first jet space is defined to be the representative of the sheaf of prolonged derivatives $\Der(\OO_X/(K,\partial_K))$ on $X$:
 $$ \Der(\OO_X/(K,\partial_K)) \cong \Gamma_X(-,J^1(X/(K,\partial_K))).$$
Here $g\colon  J^1(X/(K,\partial_K)) \to X$ is the first jet space on $X$ and the right hand side denotes the sheaf of sections of $g$.\footnote{In the special case that $\partial_K=0$ we have   $J^1(X/(K,\partial_K)) = T_{X/K}$ and the functor of points of $J^1$ is just $X$ composed with the dual number functor; i.e., 
\[
J^1(X/(K,\partial_K))(A) \to X(A[\epsilon]/\epsilon^2).
\]} Local sections of this space are local lifts of the derivation. One may observe that $J^1(X/(K,\partial_K))$ is a torsor under $T_{X/K}$, and is thus classified by $\KS_{X/K}(\partial_K) \in H^1(X,T_{X/K})$
(the difference of two derivations prolonging a derivation on the base field is zero on the base field since they agree there).

\subsubsection{Property 2: Buium--Ehresmann Theorem}
Let $K$ be an algebraically closed field equipped with a derivation $\partial$.
In what follows we let $K^{\partial} = \lbrace a \in K\colon  \partial(a)=0 \rbrace$ denote the field of constants.
The following are equivalent for $X/K$ projective:
\begin{enumerate}
	\item $\KS_{X/K}(\partial)=0$,
	\item $X/K$ admits a global lift of $\partial$, and 
	\item $X \cong X_0 \otimes_{K^{\partial}} K$ for some scheme $X_0$ defined over $K^{\partial}$;
\end{enumerate}
see \cite[Ch II, Section 1]{Buium1986}.

\subsubsection{Property 3: Kodaira--Spencer Compatibility}
In \cite{Faltings1999} it was asked if there exists an arithmetic Kodaira--Spencer class. 
He isolated the following key property: let $K$ be a field with a derivation. 
If $f\colon X \to Y$ is a morphism over $K$ (say smooth or a closed immersion) then 
 $$ f^*\KS_{Y/K}(\partial) = df_* \KS_{X/K}(\partial) \in H^1(X,f^*T_{Y/K}),$$
where 
\[
df\colon T_{X/K} \to f^*T_{Y/K}
\]
is the natural map and $df_*$ is the induced map on cohomology.

\subsection{Three analogous properties for Deligne--Illusie classes }
We now present three properties (one of which is new and stated as a theorem) which are analogs of the three properties for Kodaira--Spencer classes.

\subsubsection{Property 1: Representability of sheaf of prolongations of $p$-derivations}
We now work over $R$ a finite extension of $\ZZ_p$ with prime element $\pi \in R$.
Let $X$ be a $\pi$-formal scheme over $R$ as in section \ref{sec:notat-reduct-mod}.
We define the first $\pi$-jet space (\cite{Buium2005, Buium1995}) to represent the sheaf of $\pi$-derivations on $X$.
More precisely the map $g\colon  J^1(X) \to X$ represents the sheaf of $\pi$-derivations (in characteristic zero).
That is, local sections of $g$ correspond to local lifts of $\pi$-derivations. 
When talking about the first $\pi$-jet space of a scheme we will always mean the first $\pi$-jet space of its $\pi$-formal completion. 

We can consider the above situation modulo $\pi^2$. Here, the sheaf $\pDer(\OO_{X_1},\OO_{X_0})$ of prolongations of the $\pi$-derivation $\delta_{1}\colon  R_1 \to R_0$ is representated by sections of a map
 $$g_0\colon  J^1(X)_0 \to X_0.$$ 
Here $J^1(X)_0$ is the reduction mod $\pi$ of the first arithmetic jet space.
Local sections of $g_0$ correspond to local lifts of the Frobenius on $\OO_{X_1} \to \OO_{X_1}$, or equivalently $\pi$-derivations $\OO_{X_1} \to \OO_{X_0}$.
The scheme $J^1(X)_0$ is a torsor under $F^*T_{X_0}$ whose class is classified by $\DI_{X_1/R_1}(\delta_1)$ (this can be seen by just subtracting two $\pi$-derivations pointwise and obtaining a derivation of the Frobenius). 
 
Locally, the constructions looks as follows: for a ring $A = R\langle X \rangle/(G) = \varprojlim R[X]/(G,\pi^n)$, where $X = (x_1,\ldots, x_m)$ and $G=(f_1,\ldots, f_e)$, we have 
 $$ \OO(J^1(\Spec(A))) = R\langle X,\dot{X}\rangle/(G,\delta(G))$$
where $\delta(G)$ denotes the tuple of formal $\pi$-derivations of the elements $f_1,\ldots,f_e$ which we understand as expanding using the sum and product rules to arrive at elements of $R[X,\dot{X}]$.
For example 
 $$\delta(x^2+rx_1) = 2x_2^q\dot{x}_2 + \delta(r)x_1^q + \dot{x}_1 r^q + \pi\dot{x}_1\delta(r) C_{\pi}(x_2^2,rx_1)$$ 
where $C_{\pi}(a,b) = \frac{a^q+b^q-(a+b)^{q}}{\pi} \in R[a,b]$ is the polynomial in the addition rule for Witt vectors. 
Here the universal formal $\pi$-derivation $\delta\colon  R\langle X \rangle \to R\langle X, \dot{X} \rangle$ prolongs the fixed $\pi$-derivation on the base.  This construction globalizes to give a $\pi$-formal scheme $g\colon J^1(X) \to X$.  

\subsubsection{Property 2: Buium--Ehresmann Theorem/Descent Philosophy}
Recall that 
$$\DI_{X_1/R_1}(\delta_1)=0$$ if and only if $X_1$ has a lift of the Frobenius modulo $\pi^2$. 
In view of the analogy with Buium--Ehresmann theorem this should be viewed as a sort-of descent. 
In fact, Borger defines a category of $\Lambda_p$-schemes where the objects are pairs $(X,\phi_X)$ consisting of schemes or $p$-formal schemes together with lifts of the Frobenius and whose morphisms $(X,\phi_X) \to (Y,\phi_Y)$ are morphisms $f\colon X\to Y$ which are equivariant with respect to $\phi_X$ and $\phi_Y$. 
We think of this as a sort of descent to the field with one element in view of \cite{Borger2009}.

%This analogy is useful particularly in the context of Serre--Tate theory where canonical lifts can be viewed as $\Lambda_p$-points of local moduli. 
%This observation appears to be new. 

\subsubsection{Property 3: Deligne--Illusie Compatibility}

In the present paper we prove the following.
\begin{thm}\label{T:compatibility}
Let $f\colon X\to Y$ be a morphism of formally smooth $\pi$-formal schemes over $R$ (a finite extension of $\Z_p$ with specified prime element $\pi$). If $f$ is smooth or a closed immersion then 
\begin{equation}\label{E:compatibility}
df_* \DI_{X_1/R_1}(\delta_1) = f^* \DI_{Y_1/R_1}(\delta_1) \in H^1(X,f^*F^*T_{X_0/R_0}).
\end{equation}	
\end{thm}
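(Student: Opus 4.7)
The plan is a direct \v{C}ech cocycle computation. First, I would choose an affine formal open cover $\{V_j\}$ of $Y$ equipped with lifts of Frobenius $\phi_j \colon \OO_{V_j} \to \OO_{V_j}$; such lifts exist locally because $Y$ is formally smooth over $R$ (cf.\ \cite[Lemma 1.3]{Buium1995}). Then, pulling back along $f$ and refining as necessary, I obtain an affine formal open cover $\{U_i\}$ of $X$ together with a refinement map $i \mapsto \alpha(i)$ satisfying $f(U_i) \subseteq V_{\alpha(i)}$, and I pick Frobenius lifts $\psi_i \colon \OO_{U_i} \to \OO_{U_i}$ on each $U_i$, again by formal smoothness of $X$. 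By the definition in \S\ref{S:deligne-illusie}, the classes $\DI_{X_1/R_1}(\delta_1)$ and $\DI_{Y_1/R_1}(\delta_1)$ are represented respectively by the \v{C}ech $1$-cocycles $(\psi_i - \psi_j)/\pi \bmod \pi$ on $U_{i,0}\cap U_{j,0}$ and $(\phi_j - \phi_k)/\pi \bmod \pi$ on $V_{j,0}\cap V_{k,0}$.

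The key construction is a \v{C}ech $0$-cochain $\eta = \{\eta_i\}$ on $\{U_{i,0}\}$ with values in $f^*F^*T_{Y_0/R_0}$, defined by
$$\eta_i := \frac{1}{\pi}\bigl(\psi_i \circ f^{\sharp} - f^{\sharp} \circ \phi_{\alpha(i)}\bigr).$$
Both $\psi_i \circ f^{\sharp}$ and $f^{\sharp} \circ \phi_{\alpha(i)}$ are ring homomorphisms $\OO_Y(V_{\alpha(i)}) \to \OO_X(U_i)$ whose reductions modulo $\pi$ coincide with the composition of absolute Frobenius with $f^{\sharp}_0$, so their difference is divisible by $\pi$, and the quotient $\eta_i$ is $R_0$-linear and Frobenius-semilinear. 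Thus $\eta_i$ is genuinely a section of $f^*F^*T_{Y_0/R_0}$ over $U_{i,0}$.

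A direct computation of the \v{C}ech coboundary of $\eta$ on $U_{ij,0}$ then yields
$$\eta_j - \eta_i \;=\; df_*\!\left(\tfrac{\psi_j - \psi_i}{\pi}\right) \;-\; f^*\!\left(\tfrac{\phi_{\alpha(j)} - \phi_{\alpha(i)}}{\pi}\right),$$
which is precisely $df_*\bigl(\DI_{X_1/R_1}(\delta_1)\bigr)_{ij} - f^*\bigl(\DI_{Y_1/R_1}(\delta_1)\bigr)_{\alpha(i)\alpha(j)}$ on the common refinement, once one unwinds that $df_*$ acts on a Frobenius-semilinear derivation $D$ of $\OO_{X_0}$ as post-composition $D \circ f^{\sharp}$, while $f^*$ acts on a Frobenius-semilinear derivation of $\OO_{Y_0}$ as pre-composition by $f^{\sharp}$. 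Hence $df_*\DI(X_1) - f^*\DI(Y_1) = \delta \eta$ is a coboundary, which proves the theorem.

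The main technical obstacle — and the place where the hypothesis that $f$ is smooth or a closed immersion really enters — is verifying that the pulled-back cover of $X$ can be refined by formal affines on which the Frobenius lifts $\psi_i$ exist \emph{and} are compatible with the chosen $\phi_j$ via the map $\alpha$, so that $\eta_i$ is defined on the nose. For each of these two classes of morphisms, the relevant lifting-property statement for $\pi$-derivations (equivalently, Frobenius lifts) is a standard deformation-theoretic exercise; once this is in place, everything else is a formal manipulation of the cocycle representatives.
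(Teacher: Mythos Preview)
Your argument is correct and is actually cleaner and more general than the paper's. The paper proceeds by first establishing that closed immersions, \'etale maps, and projections $\AA^n_Y \to Y$ admit \emph{compatible} local Frobenius lifts (using the jet-space description of $J^1$), then observes that compatible lifts force the relevant cocycles to be literally equal, shows DI-compatibility is closed under composition, factors a smooth $f$ locally as \'etale followed by a projection, and finally uses the low-degree exact sequence of the \v{C}ech-to-derived spectral sequence to pass from local to global. Your direct transgression cochain $\eta_i = (\psi_i \circ f^{\sharp} - f^{\sharp}\circ \phi_{\alpha(i)})/\pi$ bypasses all of this: it measures the failure of compatibility and exhibits $df_*\DI(X_1) - f^*\DI(Y_1)$ as a coboundary immediately, with no factorization of $f$ and no spectral sequence.

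Your final paragraph, however, is confused and should be dropped. You do \emph{not} need the lifts $\psi_i$ and $\phi_{\alpha(i)}$ to be compatible for $\eta_i$ to be defined: the whole point of $\eta_i$ is that it records their incompatibility. All you need is that $X$ and $Y$ are smooth over $R$, so that (i) local Frobenius lifts exist on each, and (ii) $\OO_X$ is $\pi$-torsion-free, so that the quotient by $\pi$ is well-defined. Consequently your argument proves the compatibility \eqref{E:compatibility} for \emph{any} morphism $f$ of smooth $\pi$-formal $R$-schemes, not only smooth maps or closed immersions; the hypothesis in the theorem is an artifact of the paper's method of proof, not of the statement.
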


This property is new and is proved in \S \ref{S:proofs}. The proof uses affine bundle structures of $J^1(X/R)$---the first $\pi$-arithmetic jet space of Buium, the fact that smooth morphisms locally decompose as \'etale morphism followed by projections from an affine space, and properties of jet spaces and \'etale morphisms $X \to Z$ of $p$-formal schemes $J^1(X) \cong J^1(Z) \times_Z X $ to build ``local Frobenius compatibility data''.

\section{Proof of compatibility}\label{S:proofs}
In what follows we will fix $R$ a finite extension of $\Z_p$ with uniformizer $\pi$ and residue field $k$ of cardinality $q$. 
We will fix a $\pi$-derivation on the base.

\begin{defn}
	\begin{enumerate}
    \item []
		\item A morphism of $R$ schemes $f\colon X \to Y$ is \defi{Deligne--Illusie compatible} provided 
		$$ df_*(\DI(X_1) ) = f^* \DI(Y_1)  \in H^1(X_0,  f^*\FT_{Y_0}  ). $$ 
		\item \label{defn: frobenius compatibility}
		Let $f\colon  X\to Y$ be a morphism of $\pi$-formal schemes.
		By locally \defi{local Frobenius compatibility data} for $f$ we will mean two covers 
		$$(U_i\to X)_{i\in I} \mbox{ and } (V_i\to Y)_{i\in I}$$ 
		with lifts of the Frobenius $\phi_{U_i}$ and $\phi_{V_i}$ (with the second cover possibly having repeat open sets) such that for each $i$,
$$f(U_i)\subset V_i$$ and $f|_{U_i}$ is compatible with $\phi_{U_i}$ and $\phi_{V_i}$.
		\item 
			If $f$ admits local Frobenius compatibility data we will say $f$ is \defi{locally Frobenius compatible}. 
	\end{enumerate}
	
\end{defn}

\begin{lem}\label{lem: Frobenius compatibility}
	Let $f\colon X \to Y$ is be a morphism of smooth $\pi$-formal schemes over $\Spf(R)$. 
	\begin{enumerate}
		\item \label{lem: compatible lifts of Frobenius for closed immersions} If $f$ is a closed immersion then $f$ is locally Frobenius compatible. 
		\item \label{lem: Frobenius compatible part 1} If $f$ is \'etale then $f$ is locally Frobenius compatible. 
		\item \label{lem: Frobenius compatible part 2} If $f$ is a projection of the form $ \AA^n_Y \to Y$ then $f$ is locally Frobenius compatible.
	\end{enumerate}
\end{lem}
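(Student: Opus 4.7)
The plan is to handle the three cases separately: (3) is essentially formal, (2) follows from the unique lifting of Frobenius along étale morphisms, and (1) is the main case, which reduces locally to (2) via a careful choice of étale coordinates on $Y$ adapted to $X$. In all cases I start by covering $Y$ by affine opens $V_i$ admitting Frobenius lifts $\phi_{V_i}$, which exist by smoothness of $Y$ together with the infinitesimal lifting property. For case (3), I set $U_i := \AA^n_{V_i}$; these cover $\AA^n_Y = X$, and I extend $\phi_{V_i}$ to a lift $\phi_{U_i}$ by declaring $\phi_{U_i}(t_j) = t_j^q$ on each affine coordinate $t_1,\ldots,t_n$. The projection $U_i \to V_i$ is then tautologically Frobenius-equivariant.

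For case (2), I set $U_i := f^{-1}(V_i)$. Since $U_i \to V_i$ is étale, the standard unique-lifting property for Frobenius along étale morphisms (a consequence of the infinitesimal lifting property for étale morphisms applied level by level along the $\pi$-adic tower) produces a unique $\phi_{U_i}\colon U_i \to U_i$ satisfying $f|_{U_i}\circ \phi_{U_i} = \phi_{V_i}\circ f|_{U_i}$ and reducing to $F_{(U_i)_0}$ modulo $\pi$. This is exactly the compatibility data required.

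For case (1), near any point $x \in X$, smoothness of $X$ and $Y$ lets me choose a regular sequence $g_1,\ldots,g_m \in \OO_Y$ locally generating the ideal sheaf of $X$ such that $dg_1,\ldots,dg_m$ are linearly independent in $\Omega_{Y,x}$, and then extend to a full étale coordinate system $g_1,\ldots,g_N$ on some affine open $V \subset Y$ containing $x$. This amounts to an étale map $\psi\colon V \to \AA^N_R$ with $g_i = \psi^* y_i$. Pulling back via case (2) the standard Frobenius lift $\tilde\phi$ on $\AA^N_R$ (given by $y_j \mapsto y_j^q$ together with $\phi$ on $R$) produces a Frobenius lift $\phi_V$ on $V$. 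Because $\tilde\phi(y_j) = y_j^q$ on the nose and $\psi$ intertwines $\tilde\phi$ with $\phi_V$, we have $\phi_V(g_i) = g_i^q$ for every $i$; in particular $\phi_V$ preserves the ideal of $X \cap V$ and restricts to a Frobenius lift on $U := X \cap V$ compatible with $f$. Letting $x$ range produces the required cover.

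The hardest step is constructing the ``good'' étale coordinates in case (1)---coordinates on an open of $Y$ whose first $m$ components cut out $X$---which uses the smoothness of both $X$ and $Y$ in an essential way. The acknowledgments in the introduction indicate that \cite[Lemma 3.3.3]{achinger2017liftability} offers an alternative route through precisely this local structure result.
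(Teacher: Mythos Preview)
Your proof is correct. The arguments for cases (2) and (3) are essentially the same as the paper's, only phrased directly rather than in the jet-space language the paper adopts (sections of $J^1(X)\to X$ corresponding to Frobenius lifts, with the \'etale case handled via the cartesian square $J^1(X)\cong X\times_Y J^1(Y)$ and the affine-bundle case via $J^1(Y\times\AA^n)\cong J^1(Y)\times J^1(\AA^n)$).

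Case (1) is where your route genuinely diverges. The paper fixes an arbitrary Frobenius lift $\phi_X$ on $X$ and, working with local trivializations $J^1(X)\cong X\times\AA^n$, $J^1(Y)\cong Y\times\AA^{n+m}$, uses surjectivity of $\OO(Y)\to\OO(X)$ to lift the section data to $Y$. You instead go in the opposite direction: choose \'etale coordinates $g_1,\ldots,g_N$ on $Y$ so that $g_1,\ldots,g_m$ cut out $X$, pull back the standard Frobenius lift on $\AA^N_R$ via case (2), and observe that the resulting $\phi_V$ satisfies $\phi_V(g_i)=g_i^q$ and therefore preserves the ideal of $X$. This is cleaner in that it reduces (1) to the already-proved (2) and avoids any jet-space bookkeeping; the cost is that it requires the local structure theorem for a smooth closed subscheme of a smooth ambient scheme (existence of adapted \'etale coordinates), which is standard but not entirely trivial in the $\pi$-formal setting. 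The paper's approach trades that input for a more hands-on algebraic computation. Your remark about \cite[Lemma 3.3.3]{achinger2017liftability} is apt: that is precisely the kind of local-structure lemma your argument rests on, and indeed the paper's acknowledgments credit Achinger for pointing out this alternative route.
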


In the proofs, we will repeatedly use the fact that a scheme $X$ admits a Frobeinus lift if and only if the map $J^1(X) \to X$ admits a section, and that two lifts are compatible if and only if the induced diagram
		\begin{equation}\label{EQ:lifting-diagram}
		\xymatrix{
      J^1(X)\ar[d] \ar[r] & J^1(Y)\ar[d]\\
			X \ar[r] \ar@/_/[u] & Y \ar@/_/[u]
		}
		\end{equation}
commutes. 

\begin{proof}

We begin with case \ref{lem: compatible lifts of Frobenius for closed immersions}. We will work with $\pi$-formal schemes and omit the hats. 
		Let $X$ have dimension $n$ and $Y$ have dimension $n+m$.
		The problem is affine local, so by \cite[Chapter 3, Proposition 3.13, P.~75]{Buium2005} we may assume without loss of generality that $X$ and $Y$ are affine and that $J^1(X) \cong X \times \AA^n$ and $J^1(Y) \cong Y \times \AA^{n+m}$.		Compatible lifts of the Frobenius $\phi_X$ and $\phi_Y$ are thus equivalent to compatible sections of the diagram
		\begin{equation}
		\xymatrix{
      X \times \AA^{n}\ar[d] \ar[r] & Y \times \AA^{n+m}\ar[d]\\
			X \ar[r]& Y.
		}
		\end{equation}
On coordinate rings,  the map $X \times \AA^{n} \to  Y \times \AA^{n+m}$ is given by a map
\[
			\OO(Y)\langle s_1,\ldots,s_{n+m}\rangle   \xrightarrow{\alpha}			\OO(X) \langle t_1,\ldots, t_n \rangle 
\]
where the $s_i$ and $t_j$ are coordinates on each affine space, and our desired sections correspond to a commutative diagram
		\begin{equation}\label{eqn: closed immersion diagram}
		\xymatrix{
			\OO(Y)\langle s_1,\ldots,s_{n+m}\rangle \ar[d]^{\sigma_Y } \ar[r]^{\alpha} 
&			\OO(X) \langle t_1,\ldots, t_n \rangle \ar[d]^{\sigma_X} \\
 \OO(Y) \ar[r]^{\beta} 
& \OO(X) 
		}
		\end{equation}
		where $\sigma_Y$ and $\sigma_X$ are the natural maps given by $\sigma_Y(s_i) = \delta_Y(s_i)$ and $\sigma_X(t_i) = \delta_X(t_i)$ where $\delta_X$ and $\delta_Y$ are the $\pi$-derivations associated to $\phi_X$ and $\phi_Y$ (c.f. \cite[Chapter 3, section 3.2]{Buium2005}). 
		% As a reminder, angle brackets mean $\pi$-adic completion. 
		Observe that the map $\alpha$ is determined by a formula of the form
		$$ \alpha(s_i) = \sum_{J} a_{i,J}t^J, \ \ \ 1 \leq i \leq m$$
		where $J = (j_1,\ldots,j_n) \in \NN^n$, $t^J = \prod t_i^{j_i}$ is multi-index notation, and $a_{i,J} \in \OO(X)$ $\pi$-adically tend to zero as $\vert J\vert \to \infty$.
		
		Suppose $\sigma_X(t_i)$ is defined by $\sigma_X(t_i) = A_i \in \OO(X)$ for some choices of $A_i \in \OO(X)$.
		We will prove that there exists a lift of the Frobenius of $Y$ which is compatible with this one.
		Observe the compatibility condition $\alpha \circ \phi_Y = \phi_X \circ \alpha$ implies $\beta \circ \sigma_Y = \sigma_X \circ \alpha,$ which implies that 
		\begin{equation*}
		\beta \sigma_Y(s_i) = \sum_{J} a_{i,J} A^J := \overline{B_i}.
		\end{equation*}
		Here $A = (A_1,\ldots, A_m)$. 
		Constructing $\sigma_Y$ to make the diagram \eqref{eqn: closed immersion diagram} commute is now simple: 
		for any $B_i \in \OO(Y)$ with image $\overline{B}_i$ in $\OO(X)$, the morphism $\sigma_Y$ defined by 
		$$ \sigma_Y(s_i) = B_i $$
		works (i.e.~defines a commutative diagram).
		Note that such $B_i$ always exist because $\OO(Y)\to \OO(X)$ was assumed to be surjective.

   Next we prove the second claim.
		Suppose $f$ is \'etale.  
		By \cite[Chapter 3, Corollary 3.16, p.~77]{Buium2005} we have
		\begin{equation}\label{eqn: etale gives isomorphism}
		J^1(X) \cong X \times_Y J^1(Y) 
		\end{equation}
		as $\pi$-formal schemes. In this case, the diagram
		\begin{equation}
		\xymatrix{
      J^1(X) = X \times_Y J^1(Y) \ar[d] \ar[r] & J^1(Y)\ar[d]\\
			X \ar[r]& Y 
		}
		\end{equation}
    is cartesian, and given a section of $\sigma_Y \colon J^1(Y) \to Y$ we can simply take $\sigma_X$ to be $(\id, \sigma_Y)$.

		% In fact, picking a local section $\sigma^Y$ of $Y$, is the same as specifying a finite number of elements $\OO_Y$ (c.f. \cite[Proposition 3.13, pg 75]{Buium2005}).
		% By \eqref{eqn: etale gives isomorphism}, we can find a lift using the exact same coordinates only pulling them back. \footnote{
		% 	The isomorphism $J^1(X)\cong X \times_Y J^1(Y)$ uses the infinitesimal lifting property and representability of $\pi$-derivations as morphism to truncated Witt vectors.}

   For the third claim, 
		% We work with $\pi$-formal schemes and omit the hats. 
		let $m = \dim(Y)$. While it is not in general true that $J^1(X_1 \times X_2) \cong J^1(X_1) \times J^1(X_2)$, this isomorphism does hold if $X_2$ is affine space. We consider the diagram
		\begin{equation*}
		\xymatrix{
      J^1(X) \cong  J^1(Y) \times  J^1(\AA^n) \ar[d] \ar[r] & J^1(Y)\ar[d]\\
			X \ar[r]& Y. 
		}
		\end{equation*}
    Since $J^1(\AA^n) \cong \AA^{2n}$, any section of $Y \to J^1(X)$ extends to a section of $Y \to J^1(X)$, completing the proof.
		% We can assume without loss of generality that both sides admit a lift of the Frobenius.
		% We have 
		% $$\OO(J^1(Y)) = \OO(U)\langle t_1,\ldots, t_m \rangle,$$
		% $$\OO(J^1(Y \times \AA^n)) = \OO(Y)\langle t_1,\ldots, t_m\rangle \langle s_1,\ldots, s_n, r_1,\ldots, r_n \rangle,$$
		% and we can extend any $\pi$-derivation on $\OO(Y)$ to a $\pi$-derivation on the product.  
\end{proof}

\begin{lem}\label{lem: DI compat reductions} 
  The following are true.
	\begin{enumerate}
		\item \label{lem: Frobenius compatible implies Deligne--Illusie compatible} If $f\colon X \to Y$ admits local Frobenius compatibility data, it is Deligne--Illusie compatible. 
		
		\item \label{lem: composition of compatible is compatible} If $f\colon  X \to Z$ is Deligne--Illusie compatible and $g\colon  Z \to Y$ is Deligne--Illusie compatible then their composition is.
	\end{enumerate}
\end{lem}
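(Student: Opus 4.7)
\emph{Plan.} Part (1) is a direct cocycle-level verification, while part (2) is formal functoriality; I would prove them in that order.

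For (1), given local Frobenius compatibility data $(U_i,\phi_{U_i})$ on $X$ and $(V_i,\phi_{V_i})$ on $Y$ with $f(U_i)\subset V_i$, I compute both sides of $df_*\DI(X_1)=f^*\DI(Y_1)$ as \u{C}ech $1$-cocycles with respect to the cover $(U_i)_{i\in I}$, which refines the pullback cover $(f^{-1}V_i)$ and therefore computes both classes. On $U_{ij}$, the class $\DI(X_1)$ is represented by the Frobenius-semilinear derivation $\delta_i^X-\delta_j^X=(\phi_{U_i}-\phi_{U_j})/\pi \bmod \pi$, and $\DI(Y_1)$ is represented on $V_{ij}$ by $\delta_i^Y-\delta_j^Y$. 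The Frobenius compatibility hypothesis $\phi_{U_i}\circ f^\#=f^\#\circ\phi_{V_i}$ (and the same identity for $j$) between $\OO_{V_i}$ and $\OO_{U_i}$, after subtraction and division by $\pi$, gives the identity $(\delta_i^X-\delta_j^X)\circ f^\# = f^\# \circ (\delta_i^Y-\delta_j^Y)$. Reducing mod $\pi$ and unwinding the identifications of $F^*T_{X_0}$ and $f^*F^*T_{Y_0}$ with sheaves of Frobenius-semilinear derivations taking values in $\OO_{X_0}$, this is precisely the statement that the \u{C}ech cocycle representing $df_*\DI(X_1)$ equals the one representing $f^*\DI(Y_1)$ on $U_{ij}$.

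For (2), set $h=g\circ f\colon X\to Y$. Pulling back the identity $g^*\DI(Y_1)=dg_*\DI(Z_1)$ along $f$ and using the naturality of pullback with respect to the sheaf map induced by $dg$, I get $h^*\DI(Y_1)=(f^*dg)_*(f^*\DI(Z_1))$. Substituting the compatibility of $f$ yields $h^*\DI(Y_1)=(f^*dg)_*(df_*\DI(X_1))$. The chain rule $dh=(f^*dg)\circ df$, which remains valid after applying $F^*$ via the base change of Frobenius identification $F_X^*f^*=f^*F_Y^*$, identifies $(f^*dg)_*\circ df_*$ with $dh_*$, completing the proof.

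The only real obstacle lies in (1): making precise the identification under which the abstract sheaf map $df_*\colon F^*T_{X_0}\to f^*F^*T_{Y_0}$ on sections is literally precomposition with $f^\#$, which is what turns the Frobenius-intertwining into a cocycle equality. Once this bookkeeping — essentially the chain rule for Frobenius-semilinear derivations — is in place, the verification becomes tautological, and (2) is then a routine diagram chase.
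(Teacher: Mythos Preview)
Your proposal is correct and follows essentially the same route as the paper: for (1) you compute both \u{C}ech cocycles on the cover $(U_i)$, use the Frobenius intertwining $\phi_{U_i}\circ f^\#=f^\#\circ\phi_{V_i}$ to obtain $(\delta_i^X-\delta_j^X)\circ f^\#=f^\#\circ(\delta_i^Y-\delta_j^Y)$, and identify the two sides with $df_*\DI(X_1)$ and $f^*\DI(Y_1)$; for (2) you run the same chain of equalities via $(g\circ f)^*=f^*g^*$ and the chain rule $d(g\circ f)=(f^*dg)\circ df$. Your explicit mention of the refinement $(U_i)\to(f^{-1}V_i)$ and of the base-change identification $F_X^*f^*\cong f^*F_Y^*$ needed for the chain rule with Frobenius pullbacks actually makes the bookkeeping a bit more transparent than in the paper.
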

\begin{proof}
  We will work $\pi$-formally and omit hats everywhere. To begin the proof of the first claim, we fix local Frobenius compatibility data (Definition~\ref{defn: frobenius compatibility}): i.e., we fix open covers $(U_i\to X)_{i\in I}$ and $(V_i \to Y)_{i\in I}$ such that $f(U_i) \subset V_i$ together 
		with $ \phi^X_i\colon  \OO(U_i) \to \OO(U_i)$ and $ \phi^Y_i\colon  \OO(V_i) \to \OO(V_i) $ such that $\phi_X^if^{\#} = f^{\#}\phi_Y^i$.
		Observe that this last condition is equivalent to $\delta^X_if^{\#} = \delta_Y^i f^{\#}$ as elements of $\pDer(\OO_Y, f_*\OO_X)(U_i)$.
		This implies for each $U_{ij} = U_i \cap U_j$ we have 
		\begin{equation}\label{eqn: local DI compat}
		D_{ij}^X f^{\#}=f^{\#} D_{ij}^Y \in \FDer(\OO_Y(V_{ij}), f_*\OO_X(U_{ij})), 
		\end{equation}
		where $D_{ij}^X := \delta^X_i - \delta^X_j$ and $D_{ij}^Y := \delta^Y_i - \delta_Y^J$. Note that the right hand side of \eqref{eqn: local DI compat} induces $df \DI(X)$ and the right hand side of \eqref{eqn: local DI compat} induces $f^*\DI(Y)$.
\footnote{Since $\OO(f^{-1}(U_{ij})) \to \OO(V_{ij}) $ we may view this as giving a map on $X$ and hence giving a cocycle for a sheaf on $X$.}
		\footnote{In general, for $F$ a quasi-cohrent sheaf on $Y$, the map $f^*\colon H^i(Y,F) \to H^i(X,f^*F)$ can be performed locally by just identifying sections of $F$ with sections of $f^*F$ with new coefficients.} 
		
		The proof of the second claim requires the identities
		\begin{eqnarray*}
			d(g\circ h) = h^*(dg_*) \circ dh_*, & (g\circ h)^* = h^* g^*.
		\end{eqnarray*}
		It then follows that
		\begin{eqnarray*}
			f^*\DI(Y) &=& (g \circ h)^* \DI(Y) \\
			&=& h^* g^* \DI(Y) \\ 
			&=& h^* (dg_* \DI(Z) ) \\
			&=& (h^*dg_*)( h^* \DI(Z)) \\
			&=& (h^*dg_*)( dh \DI(X)) \\
			&=& df_* \DI(X).
		\end{eqnarray*}
		The fourth equality follows from the diagram
		$$\xymatrix{
			H^i(Z,\FDer(Z)) \ar[r]^{dg_*} \ar[d]_{h^*} & H^i(Z, g^* \FDer(Y)) \ar[d]^{h*} \\
			H^i(X,h^*\FDer(Z)) \ar[r]^{h^*dg_*} & H^i(X,h^*g^* \FDer(Y) ). 
		}
		$$ 
\end{proof}

\begin{thm}\label{thm: smooth maps are DI compatible}
	Let $f\colon  X_1 \to Y_1$ be a smooth morphism of smooth $R_1$-schemes. Then
	$$ df_*( \DI(X_1) ) = f^*( \DI(Y_1) ) \in H^1(X_0, f^*\FT_{Y_0}). $$
\end{thm}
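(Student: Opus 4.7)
The plan is to exploit the local structure of smooth morphisms to reduce to the two cases already handled in Lemma~\ref{lem: Frobenius compatibility}. Concretely, I would produce \emph{local Frobenius compatibility data} for $f$ itself and then conclude by the first part of Lemma~\ref{lem: DI compat reductions}.

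First I would invoke the Zariski-local factorization of smooth morphisms: for any smooth morphism of smooth $\pi$-formal schemes there is a cover $(U_i \to X)_{i \in I}$ together with opens $V_i \subseteq Y$ satisfying $f(U_i) \subset V_i$ and factorizations
\[
f|_{U_i} \colon U_i \xrightarrow{g_i} \AA^{n_i}_{V_i} \xrightarrow{p_i} V_i,
\]
where $g_i$ is \'etale, $p_i$ is the canonical projection, and $n_i = \dim U_i - \dim V_i$. After shrinking further I may assume that each $V_i$ carries a Frobenius lift $\phi_{V_i}$; such lifts exist Zariski locally on any smooth $\pi$-formal scheme by the infinitesimal lifting property.

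Next I would bootstrap $\phi_{V_i}$ up through the factorization. The affine-projection case of Lemma~\ref{lem: Frobenius compatibility} applied to $p_i$ produces a Frobenius lift $\phi_i^{\AA}$ on $\AA^{n_i}_{V_i}$ compatible with $\phi_{V_i}$; then the \'etale case of Lemma~\ref{lem: Frobenius compatibility} applied to $g_i$ produces a Frobenius lift $\phi_{U_i}$ on $U_i$ compatible with $\phi_i^{\AA}$, via the cartesian identification $J^1(U_i) \cong U_i \times_{\AA^{n_i}_{V_i}} J^1(\AA^{n_i}_{V_i})$. Transitivity of the lifting diagram~\eqref{EQ:lifting-diagram} then makes $\phi_{U_i}$ compatible with $\phi_{V_i}$ along $f|_{U_i} = p_i \circ g_i$. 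Hence $\bigl((U_i,\phi_{U_i}),(V_i,\phi_{V_i})\bigr)_{i\in I}$ is local Frobenius compatibility data for $f$, and the first part of Lemma~\ref{lem: DI compat reductions} yields $df_* \DI(X_1) = f^* \DI(Y_1)$ in $H^1(X_0, f^* \FT_{Y_0})$.

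The main obstacle is that the composition part of Lemma~\ref{lem: DI compat reductions} cannot be applied directly: the factorizations $p_i \circ g_i$ exist only Zariski locally on $X$, and there is in general no global factorization of $f$ through an affine-space bundle. The trick is therefore not to chain DI-compatible morphisms globally, but instead to perform both Frobenius extension steps on the same index set~$I$, so that $\phi_{U_i}$ and $\phi_{V_i}$ together constitute one coherent set of local Frobenius compatibility data for $f$ itself. The bootstrap above arranges this automatically.
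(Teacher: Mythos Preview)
Your proposal is correct and takes a genuinely different---and cleaner---route than the paper's own proof. The paper argues in two stages: first it shows that each local factorization $f|_{U_i} = p_i \circ g_i$ is a composition of Deligne--Illusie compatible morphisms (using both parts of Lemma~\ref{lem: DI compat reductions}), which only yields the equality $df_*\DI(X_1)|_{U_{i,0}} = f^*\DI(Y_1)|_{U_{i,0}}$ in $H^1(U_{i,0}, f^*\FT_{Y_0})$ on each chart; it then invokes the low-degree exact sequence of the \v{C}ech-to-derived spectral sequence to conclude that local agreement of these two global classes forces global equality, via the injectivity of $H^1(X_0, f^*\FT_{Y_0}) \hookrightarrow H^0(X_0, \underline{H}^1(f^*\FT_{Y_0}))$. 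Your approach avoids both the composition part of Lemma~\ref{lem: DI compat reductions} and the spectral sequence entirely: by bootstrapping a chosen $\phi_{V_i}$ first through the projection and then through the \'etale map on the \emph{same} index $i$, you manufacture genuine local Frobenius compatibility data for $f$ itself, so that part~(\ref{lem: Frobenius compatible implies Deligne--Illusie compatible}) of Lemma~\ref{lem: DI compat reductions} applies directly to $f\colon X \to Y$ and gives the global equality in one stroke. The price is that you must check transitivity of Frobenius compatibility along the composition $p_i \circ g_i$, but as you note this is immediate from stacking two instances of diagram~\eqref{EQ:lifting-diagram}. Your observation that the obstacle to using the composition lemma globally is precisely the non-existence of a global factorization is exactly the point, and your workaround is the natural one.
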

\begin{proof}

	We first prove the theorem locally and assume we can factor the morphism $f\colon X\to Y$ as
		$$ X \to \AA^n_{Y} \to Y, $$
		where the first map is \'etale and the second map is the standard projection (see e.g.~\cite[Tag 039P]{stacks-project}). 
		This can be done locally where by ``locally'' we mean that there exists a cover by affine open subsets $X' \subset X$ and $Y'\subset Y$ with $f(X') \subset Y'$ with this factorization. 
		
		We will now express $f$ as a composition of Deligne--Illusie compatible morphisms.
		We apply Lemma~\ref{lem: Frobenius compatibility} part~\ref{lem: Frobenius compatible part 1} and Lemma~\ref{lem: Frobenius compatibility} part~\ref{lem: Frobenius compatible part 2} together with Lemma~\ref{lem: DI compat reductions} part~\ref{lem: Frobenius compatible implies Deligne--Illusie compatible} to get the outer morphisms of the composition to be Deligne--Illusie compatible.
		Lemma~\ref{lem: DI compat reductions} part \ref{lem: composition of compatible is compatible} says the composition of compatible morphisms is compatible.
		
We now show compatibility globally. Consider a covering $(U_{i,0} \to X_0)_{i\in I}$ such that
		$$ df_*(\DI(X_1)) \vert_{U_{i,0}} = f^*(\DI(Y_1)) \vert_{U_{i,0}} \in \underline{H}^1(X_0, f^*\FT_{Y_0})(U_{i,0}).$$
		Putting these together gives an element 
		$$ c \in H^0(X_0, \underline{H}^1(f^*\FT_{Y_0})).$$
		The comparison between the cohomology sheaf $\underline{H}^1(X_0,f^*\FT_{Y_0})$ and the cohomology $H^1(X_0,f^*\FT_{Y_0})$ comes from the low degree exact sequence of the spectral sequence comparing sheafy cohomology and cohomology (see for example \cite[01ES]{stacks-project} for the spectral sequence). The convergent spectral sequence is given by $$E_2^{i,j} = H^i(X_0, \underline{H}^j(f^*\FT_{Y_0})) \implies H^{i+j}(X_0,f^*\FT_{Y_0}) $$
		and the low degree exact sequence gives 
		\begin{eqnarray*}
			0 &\to& H^1(X_0, \underline{H}^0(\FT_{Y_0})) \to H^1(X_0,f^*\FT_{Y_0}) \to H^0(X, \underline{H}^1(f^*\FT_{Y_0})) \\
			&\to& H^1(X_0, \underline{H}^0(f^*\FT_{Y_0})) \to H^2(X_0, f^*\FT_{Y_0}) 
		\end{eqnarray*}
		which reduces to 
		$$ 0 \to H^1(X_0,f^*\FT_{Y_0}) \to H^0(X,\underline{H}^1(f^*\FT_{Y_0})) \to H^2(X_0,f^*\FT_{Y_0}) \to 0. $$
		By local compatibility we have that $f^*\DI(Y_1)$ and $df \DI(X_1)$ in $H^1(X_0,f^*\FT_{Y_0})$ map to the same element in $H^0(X_0,\underline{H}^1(f^*\FT_{Y_0}))$; since the map 
\[
H^1(X_0, f^*\FT_{Y_0}) \to H^0(X_0,\underline{H}^1(f^*\FT_{Y_0}))
\]
 is injective, the desired equality follows. 
\end{proof}

\section{Applications}\label{S:applications}

	\subsection{The Wittfinitesimal Torelli problem}
Let $R$ be the valuation ring of a subfield of $\C_p$. We wish to study \eqref{E:compatibility} in the special case that $X=C$ is a (pointed) curve of genus $g\geq 2$ over $R$ and $Y = \Jac_C = A$ is its Jacobian. The compatibility condition for the Abel--Jacobi map $j\colon C\to A$ in \eqref{E:compatibility} can be intepreted as saying the ``wittfinitesimal torelli map''
\begin{equation}\label{E:wt}
dj_*\colon  H^1(C_0, F^*T_{C_0}) \to H^1(A_0, F^*T_{A_0})
\end{equation}
carries $\DI_{C_1/R_1}(\delta_1)$ to $\DI_{A_1/R_1}(\delta_1)$. 
In the Kodaira--Spencer setting, the map \eqref{E:wt} is injective outside the hyperelliptic locus \cite{Oort1979}.
This map is also has the geometric interpretation as the tangent to the Torelli map --- the Torelli map being the map between the moduli space of curves of genus $g$ and the moduli space of principally polarized abelian varieties. 
The prospect of such injectivity is interesting in our setting as it is a theorem of Raynaud that for $g\geq 2$ the $\DI_{C_1/R_1}(\delta_1) \neq 0$ (see \cite{Dupuy2014} for a generalization of Raynaud's result). 
If \eqref{E:wt} were injective this would imply that $A_1$ would not have a lift of the Frobenius. 
% (Note, or course, that some abelian varieties do admit lifts of Frobenius; for instance, on a supersingular elliptic curve, or a Fermat curve)
Unfortunately (or fortunately), for dimension reasons this map is not injective.
\begin{lem}\label{L:non-injective}
	The map \eqref{E:wt} is not injective if $(2p+1)(g-1)>g^2$. 
\end{lem}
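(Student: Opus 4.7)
The plan is to prove non-injectivity by the cheapest possible argument: a dimension count. Concretely, I will show that the hypothesis $(2p+1)(g-1) > g^2$ is precisely the condition that $\dim H^1(C_0, F^*T_{C_0}) > \dim H^1(A_0, F^*T_{A_0})$, after which non-injectivity of the linear map $dj_*$ is automatic.

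First I would compute the target dimension. Since $A_0$ is an abelian variety of dimension $g$, its tangent sheaf is trivial, $T_{A_0} \cong \OO_{A_0}^{\oplus g}$, and hence its absolute Frobenius pullback is also trivial of rank $g$. Therefore
$$H^1(A_0, F^*T_{A_0}) \cong H^1(A_0, \OO_{A_0})^{\oplus g},$$
which has dimension $g \cdot g = g^2$.

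Next I would compute the source dimension via Riemann--Roch. The tangent sheaf $T_{C_0}$ is the anticanonical line bundle, of degree $2-2g$. Absolute Frobenius pullback raises transition functions to their $p$-th power, so $F^*T_{C_0}$ is a line bundle of degree $p(2-2g) = -2p(g-1)$. For $g \geq 2$ this degree is strictly negative, so $H^0(C_0, F^*T_{C_0}) = 0$. Riemann--Roch then gives
$$\dim H^1(C_0, F^*T_{C_0}) = -\chi(F^*T_{C_0}) = 2p(g-1) + g - 1 = (2p+1)(g-1).$$
Combining the two computations, the hypothesis $(2p+1)(g-1) > g^2$ forces $dj_*$ to have nontrivial kernel, which is the claim.

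The only fussy point, and the closest thing to an obstacle, is that strictly speaking the induced map on cohomology goes $H^1(C_0, F^*T_{C_0}) \to H^1(C_0, j^*F^*T_{A_0})$ rather than to $H^1(A_0, F^*T_{A_0})$ as displayed in \eqref{E:wt}. However, since $T_{A_0}$ is trivial, $j^*F^*T_{A_0} \cong \OO_{C_0}^{\oplus g}$, and the classical identification $H^1(A_0, \OO_{A_0}) \cong H^1(C_0, \OO_{C_0})$ induced by the Abel--Jacobi map shows these two targets have the same dimension $g^2$, so the count is unaffected. Beyond this bookkeeping there is no real obstacle; the lemma is essentially immediate from Riemann--Roch plus the triviality of the tangent bundle of an abelian variety.
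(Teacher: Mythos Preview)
Your argument is correct and essentially identical to the paper's: both proceed by the same dimension count, using Riemann--Roch to get $h^1(C_0,F^*T_{C_0})=(2p+1)(g-1)$ and triviality of $T_{A_0}$ to get $h^1(A_0,F^*T_{A_0})=g^2$. Your version is in fact slightly more careful, spelling out the degree computation and the vanishing of $H^0$, and noting the identification of the two candidate targets via $j$.
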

\begin{proof}
	This is follows from dimension counting and the rank-nullity theorem of elementary linear algebra. 
	By Riemann--Roch, $h^1(C_0,F^*T_{C_0}) = (2p+1)(g-1)$. Since $T_{A_0} \cong \OO_{A_0}^g$ we have $F^*T_{A_0} \cong F^*( \OO_{A_0}^g ) \cong \OO_{A_0}^g$ and hence $h^1(A_0,F^*T_{A_0}) = g h^1(A_0,\OO_{A_0}) = g^2$.
	This shows the map is not injective when $(2p+1)(g-1)>g^2$. 
\end{proof}

Although the map \eqref{E:wt} is not injective is it still interesting to determine when 
$$ dj_*( \DI_{C_1/R_1}(\delta_1) ) \neq 0,$$
as this gives a criterion to check that a given Jacobian doesn't admit a lift of the Frobenius. 
%In fact, the situation isn't actually as bad as Lemma~\ref{L:non-injective} would indicate. We actually have $\DI^m_{C_1/R_1)(\delta_1) \in H^1_{dR}(F^*T_{C_0}, \nabla^{can})$ which has a fixed dimension with $p$.
This is related to conjectures of Coleman about Jacobians with complex multiplication. 

\begin{rem}
	There are positive results in this direction which say that on an open subset of the ordinary locus in the moduli space of curves of genus $g\geq 2$, the canonical lift of a Jacobian is no longer a Jacobian by showing canonical lifts of Jacobians don't have lifts of the Frobenius moduli $p^2$.
	This was proved independently in the two papers \cite{Dwork1986, Oort1986}.
\end{rem}

%%%%%%%%%%%%%%%%%%%%%%%%%%%%%%%%%%%%%%%%%%%%%%%%%%%%%
\subsection{A conjecture of Coleman}
%%%%%%%%%%%%%%%%%%%%%%%%%%%%%%%%%%%%%%%%%%%%%%%%%%%%%
For the definition of a CM field we refer the reader to \cite[1.3.3]{Chai2014}.
By a CM algebra, we will mean a product of CM fields. 

In what follows, for an abelian variety $A$ over a ring $R$ we let $\End(A/R)$ denote the ring of endomorphisms of $A$ as an $R$-scheme and we will let $\End^0(A/R) = \End(A/R)_{\Q} = \End(A/R) \otimes \Q$. 

\begin{defn}
\label{def:CM}
	Let $A$ be an abelian scheme over a ring $R$, and let $g$ the relative dimension of $A \to \Spec R$.
\begin{enumerate}
	\item Let $F$ be a CM field. 
	By a \defi{complex multiplication by $F$} on $A$ we will mean an injective map $j\colon  F \to \End^0(A/R)$. 
	\item If there exists a semisimple $\Q$-subalgebra $P \subset \End^0(A/R)$ with $\dim_{\Q}(P) = 2g$ then we say $A$ has \defi{sufficiently many complex multiplications} abbreviated $\smCM$. (We will be mostly interested in the case when $j\colon  F \to \End^0(A/R)$, with $F$ a field of degree $2g$ over $\Q$ for this paper.) 
\end{enumerate}
\end{defn}

\begin{rem}
	The following facts can be found in (say) \cite{Moonen2017}.
	\begin{enumerate}
		\item Let $A/K$ be an abelian variety over a field of characteristic zero. If $j\colon F \to \End^0(A/K)$ is an embedding with $\dim_{\Q}(F) = 2g$ then $\End^0(A/K)$ is commutative. 
		\item In both characteristic $p$ and characteristic zero there exist abelian varieties with noncommutative endomorphism algebras. 
		\item There do not exist ordinary simple abelian varieties over finite fields with noncommutative $\End^0(A)$.
		In fact if the $p$-rank $f$ has $f\geq g-1$ then $\End(A/\FF_q)$ is commutative.
		\item For $A_0/\FF_q$ an abelian variety with $q=p^a$ the center of the endomorphism algebra is generated by the Frobenius: $Z(\End(A/\FF_q)) = \ZZ[F_{A_0,q}]$. 
		In every case but the case that $A_0$ is a special type of supersingular elliptic curve, this center is an imaginary quadratic field. 
		\item Terminology for $\smCM$ can vary. For example \cite[around Proposition 3.5]{DeJong1991} calls abelian schemes with $\smCM$ abelian schemes of CM type. 
	\end{enumerate}
\end{rem}

\begin{conj}[Coleman, \cite{Coleman:torsion-points-galois-book}]
	For $C/\C$ of genus $g\geq 8$ there are only finitely many $C$ such that $\Jac_C$ has sufficiently many complex multiplications. 
\end{conj}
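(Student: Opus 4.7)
The plan is to combine Lemma~\ref{L:CM-field} with a dimension analysis of the wittfinitesimal Torelli locus and an application of the Andr\'e--Oort conjecture (now a theorem in the cases of relevance). First, fix a prime $p \neq 2$ and stratify the set of putative CM Jacobians of genus $g$ by the degree $d$ of their field of moduli. For each fixed $d$, Lemma~\ref{L:CM-field} supplies a universal integer $m = m(d,p,g)$ such that every CM Jacobian of field-of-moduli degree at most $d$ has moduli point lying in the closed locus
\[
Z_{p,d} := \{ [C] \in \mathcal{M}_g \;:\; dj_* \DI^{m(d,p,g)}(C_1) = 0 \} .
\]
Thus the problem reduces to showing that $\bigcap_p Z_{p,d}$ has only finitely many CM points, uniformly in $d$.

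Next, I would verify that CM Jacobians really do populate $Z_{p,d}$ using Serre--Tate theory: a CM abelian variety $A = \Jac_C$ has good ordinary reduction at a density-one set of primes, and over any such prime $A$ agrees (after unramified base change) with the canonical lift of its reduction. Canonical lifts admit Frobenius lifts of arbitrary order, so $\DI^m(A_1) = 0$, and Theorem~\ref{T:compatibility} then yields $dj_* \DI^m(C_1) = j^* \DI^m(A_1) = 0$. This both justifies the relevance of $Z_{p,d}$ and sets the two remaining targets: bound $\dim Z_{p,d}$, and bound the number of CM points in the intersection as $p$ and $d$ vary.

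For the finiteness itself the plan is a two-step argument. Step one: vary $p$ over a density-one set of primes and show that $\bigcap_{p \gg 0} Z_{p,d}$ cuts out a proper closed subvariety $W_d \subsetneq \mathcal{M}_g$. Here the obstruction-theoretic inputs of the paper furnish \emph{local} non-vanishing of $\DI^m$ along generic deformations of a Jacobian, while Lemma~\ref{L:non-injective} constrains $\ker(dj_*)$ to the controllable rank $(2p+1)(g-1) - g^2$, so that no identical vanishing of $dj_*\DI^m(C_1)$ along a positive-dimensional family can persist for all primes. Step two: apply Andr\'e--Oort inside $\mathcal{A}_g$ to the image of $W_d$ under the Torelli morphism, using that for $g \geq 8$ the Torelli locus is not contained in any proper Shimura subvariety of $\mathcal{A}_g$, and then handle the dependence on $d$ via a Northcott-type input on CM fields of dimension $2g$. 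The main obstacle, and the reason this program is not a full proof, is step one: translating the pointwise inequality $dj_* \DI^m(C_1) \neq 0$ into a geometric statement about closed subschemes of $\mathcal{M}_g$ requires an effective, family-version of Raynaud's non-vanishing that remains uniform as $C$ moves through the CM locus and $p$ varies, and no such uniform control is presently available.
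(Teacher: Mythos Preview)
The statement you are attempting to prove is a \emph{conjecture}, not a theorem; the paper does not prove it and does not claim to. It is stated as Coleman's conjecture, followed immediately by a remark noting its history (the $g\geq 4$ version was disproved by de Jong--Noot, and the $g\geq 8$ version remains open). The paper's contribution is to relate the conjecture to Deligne--Illusie classes via Lemma~\ref{L:CM-field} and Theorem~\ref{T:complex-multiplication}, not to settle it. So there is no ``paper's own proof'' to compare against, and your proposal should be read as a speculative program rather than a proof.

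On the substance of your program: you correctly identify a gap in step one, but step two has a more serious flaw. Andr\'e--Oort says that the Zariski closure of a set of special points is a finite union of special (Shimura) subvarieties. To conclude finiteness of CM Jacobians you would need that the (open) Torelli locus contains no positive-dimensional special subvariety of $\mathcal{A}_g$. What you assert instead is that the Torelli locus is not \emph{contained in} a proper Shimura subvariety, which is a much weaker statement and does not suffice. In fact, if one knew that the Torelli locus contains no positive-dimensional Shimura subvariety for $g\geq 8$, Andr\'e--Oort would prove Coleman's conjecture outright with no input from Deligne--Illusie classes at all; this containment question is precisely the open heart of the problem, and the counterexamples for $g\leq 7$ arise exactly from Shimura curves sitting inside the Torelli locus. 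Your invocation of Lemma~\ref{L:non-injective} is also backward: that lemma records the \emph{failure} of injectivity of $dj_*$, which is an obstruction to your scheme rather than a source of control.
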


\begin{rem}
	The conjecture was originally given by Coleman for $g\geq 4$ which was proven false in \cite{DeJong1991}. 
	The version stated here can be found in \cite{Chai2012}.	
\end{rem}

To explain how the Coleman conjecture is related to Deligne--Illusie classes we first need to recall some facts about Serre--Tate theory, canonical and quasi-canonical lifts, and some basic CM theory.

%%%%%%%%%%%%%%%%%%%%%%%%%%%%%%%%%%%%%%%
\subsection{Serre--Tate theory}
%%%%%%%%%%%%%%%%%%%%%%%%%%%%%%%%%%%%%%%
The following is found in \cite{DeJong1991} and is based on work found in \cite{Messing1972,Katz1981}.
Let $k$ be contained in $\overline{\FF}_p$. 
Let $R$ a complete local ring with residue field $k$. 
Recall that the Serre--Tate theorem states that formal deformations of Abelian schemes are in correspondence with pairings on associated Tate modules; i.e., there is a bijection
\begin{align*}
\DefAS_{A_0/R_0}(R) &\xrightarrow{\sim} \Hom_{\Z_p}(T_pA_0 \otimes T_pA_0^t, \widehat{\G}_m(R)) \\
A &\mapsto q_A.
\end{align*}
Given a lift $A$, we call $q_A$ the associated \defi{Serre--Tate pairing}. 

Suppose that $A_0$ is ordinary of dimension $g$ (so that $A_0[p](\Fbar_p) \cong \mathbf{F}_p^g$). We may fix a basis $\lbrace v_i \rbrace_{i=1}^g$ of $T_pA_0(k)$ and $\lbrace w_j \rbrace_{j=0}^g$ of $T_pA_0^t(k)$.
The \defi{Serre--Tate parameters} (relative to the chosen bases) are
\begin{equation}
q_{i,j}(A) = q_A(v_i,w_j) \in \widehat{\G}_m(R).
\end{equation}

\begin{defn}
	Let $A_0/k$ be an ordinary abelian scheme.
	Let  $R$ be a complete local ring with residue field $k$ and maximal ideal $\mm$. 
We say that an ordinary $A \in \Def_{A_0/k}(R)$ is a \defi{canonical lift} of $A_0$ if for all $\alpha \in T_pA_0(\overline{k})$ and $\beta\in T_pA^t_0(\overline{k})$ we have 
		$$ q_A(\alpha,\beta) = 1 \in \widehat{\G}_m(R) = 1+\mm.$$
(Such a lift is unique and will be denoted by $\widetilde{A_0}$.) 

We say that $A \in \Def_{A_0/k}(R)$ is \defi{quasi-canonical} if  there exist some natural number $m$ such that for all $\alpha\in T_pA_0(k),\beta \in T_pA_0^t(k)$ we have 
		$$ q_A(\alpha,\beta)^m = 1 \in \widehat{\G}_m(R) = 1+\mm.$$
		We denote the set of quasi-canonical lifts by $\qCL(R)$. 

\end{defn}

\begin{lem}[See e.g.~{\cite[Section 3]{DeJong1991}}]
Suppose that $A$ is a lift of $A_0$ to $R$. Then the following are true.
\begin{enumerate}
	\item If $A \in \qCL(R)$, then the power of $m$ in the definition of quasi-canonical can be taken to be a power of $p$;
	\item being quasi-canonical is an isogeny invariant;
	\item $\End(A_0) = \End(\widetilde{A_0})$;
	\item if $A \in \qCL(R)$, then $\End(A_0)_{\Q} \cong \End(A)_{\Q}$;
	\item $A = \widetilde{A_0}$ if and only if $A$ has a lift of the Frobenius;
	\item $A$ is a quasi-canonical lift if and only if $A$ has a lift of a power of the Frobenius. 
\end{enumerate}	
\end{lem}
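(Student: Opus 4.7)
The plan is to prove the contrapositive. Suppose $A$ admits a principally polarized CM structure with field of moduli $K_0$ of degree $<d$ over $\Q$; I will show that $dj_*\DI^m(C_1)=0$ for a uniform $m=m(d,p,g)$, where $j\colon C\to A$ is the Abel--Jacobi map. Since $g\ge 2$ this map is a closed immersion, so Theorem~\ref{T:compatibility} gives
\[
dj_*\DI^m(C_1) = j^*\DI^m(A_1) \in H^1(C_0, j^*F^*T_{A_0}),
\]
and it suffices to exhibit an $m$ for which $\DI^m(A_1)=0$, i.e.\ for which $A_1$ admits a lift of the $p^m$-Frobenius modulo $\pi^2$.

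I would produce this lift as a CM endomorphism. Because $A$ is simple with sufficiently many CMs, $F := \End^0(A_{\Qbar})$ is a CM field of degree $2g$. Starting from $K_0$ and passing to a finite extension of degree bounded in terms of $d$ and $g$ (large enough to contain the reflex field of $(F,\Phi)$, which has degree $\le (2g)!$, and to spread out $(A,\Theta,j)$ as an abelian scheme with $\OO_F$-action over a local valuation ring with good reduction), I obtain $R$ with residue field $k$ of cardinality $q_0=p^{f_0}$, with $f_0$ bounded by a function of $d$ and $g$. The Shimura--Taniyama formula identifies the $q_0$-Frobenius $\pi_{q_0}\in \End(A_0)$ as the reduction of a specific element of $F$. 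A bounded power $\pi_{q_0}^r$ --- with $r$ controlled by the index of $\ZZ[\pi_{q_0}]$ in $\OO_F$, itself bounded in terms of $p$ and $g$ --- lies in $\OO_F \subset \End(A/R)$. This global endomorphism reduces modulo $\pi$ to the $p^m$-Frobenius with $m = rf_0$, so it witnesses a global Frobenius lift on $A$ and hence $\DI^m(A_1)=0$.

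The main obstacle is making the constants genuinely uniform in $(d,p,g)$. This requires quantitative control over (a) the degree of the field extension needed to realize the CM action integrally together with $(A,\Theta,j)$ and attain good reduction (the hypothesis $p\ne 2$ lets one bound potential ramification coming from the polarization and avoids quadratic obstructions in the good-reduction step), and (b) the index $[\OO_F : \ZZ[\pi_{q_0}]]$, which governs how high a power of the $q_0$-Frobenius one must take to land in the global CM ring. Both are classical and can be extracted from standard references on CM abelian varieties; combining them yields the required $m = m(d,p,g)$ and, via the compatibility display above, completes the proof.
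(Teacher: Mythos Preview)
Your proposal proves the wrong statement. The lemma you were asked to prove is the list of six Serre--Tate--theoretic facts about canonical and quasi-canonical lifts (e.g., that $\End(A_0)=\End(\widetilde{A_0})$, that $A=\widetilde{A_0}$ iff $A$ admits a Frobenius lift, etc.). What you wrote is instead an argument for Lemma~\ref{L:CM-field} from the introduction, namely that if a pointed curve $C$ has a Jacobian with a principally polarized CM structure of bounded field of moduli then $dj_*\DI^m(C_1)=0$ for a uniform $m$. Nothing in your write-up addresses any of items (1)--(6): there is no discussion of the Serre--Tate pairing $q_A$, no use of the lifting criterion $q_A(f_0(\alpha),\beta)=q_B(\alpha,f_0^t(\beta))$, and no argument for why the $m$ in the definition of quasi-canonical may be taken to be a $p$-power, why endomorphisms of $A_0$ lift to $\widetilde{A_0}$, or why Frobenius lifts characterize canonical/quasi-canonical lifts.

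For the record, the paper does not give a self-contained proof of this lemma either: it states that the six facts follow by repeated application of the lifting lemma
\[
f_0\colon A_0\to B_0 \text{ lifts to } A\to B \iff q_A(f_0(\alpha),\beta)=q_B(\alpha,f_0^t(\beta)) \text{ for all } \alpha,\beta,
\]
and refers to \cite[Section~3]{DeJong1991} for details. If you want to sketch an actual proof, that lifting criterion is what you should be invoking (for instance, applying it with $f_0=F_{A_0}$ gives (5), and with $f_0\in\End(A_0)$ gives (3)); the contrapositive/compatibility argument you wrote, while relevant elsewhere in the paper, is simply not a proof of the stated lemma.
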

To derive these facts we basically apply the following lifting lemma repeatedly. 
For details we refer the reader to \cite[Section 3]{DeJong1991} for a well-written treatment.
\begin{lem}[See e.g.~{\cite[Section 3]{DeJong1991}}]
	Let $A_0$ and $B_0$ be abelian varieties over $k$ with formal lifts $A,B$ to $R$.
	Let $f_0\colon A_0 \to B_0$ be a morphism of abelian schemes. Then
	$$ \mbox{$f_0$ lifts to a morphism $A \to B$ } \iff q_A(f_0(\alpha),\beta) = q_B(\alpha,f_0^t(\beta)),$$
	for all $\alpha \in T_pB_0$ and all $\beta\in T_pA_0$. 
\end{lem}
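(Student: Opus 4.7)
My plan is to derive both directions of the equivalence from naturality of the Serre--Tate pairing combined with a graph argument on the product lift.

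First I would prove the forward implication by invoking functoriality of $q_A$. If $f\colon A\to B$ is a lift of $f_0$, then $f$ induces a morphism of $p$-divisible groups $A[p^\infty]\to B[p^\infty]$ compatible with their connected-\'etale sequences, and the dual morphism $f^t$ induces the corresponding compatibility on the dual side. Since $q_A$ is constructed canonically from the connecting homomorphism associated to these extensions (as in \cite[Ch.~V]{Messing1972}), naturality of that connecting map immediately yields the bilinear identity relating $q_B(f_0(\alpha),\beta)$ and $q_A(\alpha,f_0^t(\beta))$. This step is essentially the standard ``adjointness'' of Weil-type pairings under isogenies.

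For the backward implication I would argue via the graph $\Gamma_{f_0}\subset A_0\times B_0$. Because a morphism $A_0\to B_0$ is the same as a closed abelian subscheme of $A_0\times B_0$ projecting isomorphically to $A_0$, lifting $f_0$ to $f\colon A\to B$ is equivalent to lifting $\Gamma_{f_0}$ to an abelian subscheme of the product lift $A\times B$; rigidity ensures that the lift of an isomorphism remains an isomorphism. Under Serre--Tate the pairing of $A\times B$ is the block-diagonal $q_A\oplus q_B$ on $(T_pA_0\oplus T_pB_0)\otimes(T_pA_0^t\oplus T_pB_0^t)$. The Tate module $T_p\Gamma_{f_0}$ is the diagonal $\{(\alpha,f_0(\alpha))\}\subset T_pA_0\oplus T_pB_0$, and dually $T_p\Gamma_{f_0}^t$ is the pushout identifying $T_pA_0^t$ with $T_pB_0^t$ along $f_0^t$. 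Applying Serre--Tate to $\Gamma_{f_0}\hookrightarrow A\times B$, the subscheme lifts precisely when the restriction of $q_{A\times B}$ to $T_p\Gamma_{f_0}\otimes T_p\Gamma_{f_0}^t$ descends to the intrinsic Serre--Tate pairing of the graph; unwinding the diagonal and pushout identifications reduces this compatibility to the single bilinear identity in the statement.

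The main obstacle I anticipate is not conceptual but bookkeeping: one must match $T_p\Gamma_{f_0}^t$ with the pushout along $f_0^t$ and verify that the restricted block-diagonal pairing reassembles correctly into the asserted identity. Once this diagram chase is carefully set up, both implications become formal. A slicker alternative would be to use the rigidity of morphisms of $p$-divisible groups (Drinfeld--Messing) to reduce directly to Cartier duality on $p$-divisible groups, avoiding the explicit graph manipulation; either route produces the same lifting criterion.
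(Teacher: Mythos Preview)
The paper does not give its own proof of this lemma: it is stated with a reference to \cite[Section~3]{DeJong1991} and used as a black box to derive the preceding list of facts about canonical and quasi-canonical lifts. So there is no in-paper argument to compare your proposal against.

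On your proposal itself: the forward direction is fine and is exactly the functoriality one finds in Katz's or Messing's treatment. The backward direction via the graph is a legitimate strategy, but as written it pushes the difficulty around rather than resolving it. You reduce lifting $f_0$ to lifting the closed immersion $\Gamma_{f_0}\hookrightarrow A_0\times B_0$, and then assert that Serre--Tate tells you when this inclusion lifts; but that is again an instance of the very lifting criterion you are trying to prove (now for a closed immersion rather than an arbitrary morphism). To make the graph argument noncircular you would need an independent criterion for lifting abelian subschemes, which in practice comes down to the same extension-class computation you would do directly. Your ``slicker alternative'' via the connected--\'etale sequence and Cartier duality is in fact the standard route (and is what \cite{Katz1981} and \cite{DeJong1991} do): one uses the Serre--Tate equivalence to reduce to morphisms of ordinary $p$-divisible groups, writes each side as an extension of an \'etale group by a multiplicative one, and observes that a morphism of such extensions exists precisely when the extension classes satisfy the stated adjointness. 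I would promote that alternative to the main argument and drop the graph detour.
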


%%%%%%%%%%%%%%%%%%%%%%%%%%%%%%%%%%%%%%%%%%%
\subsection{Applications to Coleman's Conjecture on CM Jacobians}\label{S:applications-to-coleman}
%%%%%%%%%%%%%%%%%%%%%%%%%%%%%%%%%%%%%%%%%%%

While it is well-known that the canonical lift of an abelian variety has smCM, the converse is less well-known (c.f.~\cite[\S 4.1]{Buium2009}).
\begin{thm}\label{T:complex-multiplication}
	Let $R$ be a finite extension of $W(k_0)$ where $k_0\subset \overline{\FF}_p$.
	Let $k$ be the residue field of $R$. 
	Let $A/R$ be an abelian scheme of relative dimension $g$.  
	
	\begin{enumerate}
		\item \label{I:qCL-implies-CM} If $A$ is $\qCL$, then $A$ has smCM.
% a complex multiplication $j\colon  \Q(F_{A_k}) \to \End^0(A)$ where $\Q(F_{A_k})$ is a CM field of dimension $2g$.
		\item \label{I:CM-implies-LOF} If $A/R$ has smCM, 
% a complex multiplication $ j\colon  F \to \End^0(A/R)$ such that $\dim_{\Q}(F) =2g$,
      then $A$ has a lift of the Frobenius. 
	\end{enumerate}
\end{thm}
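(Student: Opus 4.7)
My plan is to treat the two implications separately, exploiting the Serre--Tate/Katz lifting machinery recalled in the previous subsection in both directions; the argument implicitly uses that $A_0$ is ordinary, as the Serre--Tate parameter formalism requires.

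For part \ref{I:qCL-implies-CM}, I would invoke the lemma immediately above: since $A$ is quasi-canonical, reduction induces an isomorphism $\End^0(A_0) \cong \End^0(A)$, so it suffices to exhibit a semisimple commutative $\Q$-subalgebra of $\End^0(A_0)$ of dimension $2g$. Since $A_0$ is ordinary and descends to some $\FF_q \subset \overline{\FF}_p$, Honda--Tate theory writes $A_0 \sim \prod B_i^{n_i}$ with each $B_i$ simple ordinary and $\End^0(B_i)$ a CM field of degree $2\dim B_i$. The diagonal subalgebra $\prod \End^0(B_i)^{n_i} \subset \prod M_{n_i}(\End^0(B_i))$ is then semisimple, commutative, of dimension $2\sum n_i \dim B_i = 2g$, and transports to $\End^0(A)$, producing the required smCM structure.

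For part \ref{I:CM-implies-LOF}, I would show directly that $A$ is quasi-canonical, after which the Frobenius lift is immediate from the final bullet of the preceding lemma. Pick an order $\mathcal{O}$ inside the smCM algebra $P \subset \End^0(A)$ with $\mathcal{O} \subset \End(A)$; each $\alpha \in \mathcal{O}$ reduces to a liftable endomorphism $\alpha_0 \in \End(A_0)$, so the lifting criterion yields
\[
q_A(\alpha_0(v_i), w_j) = q_A(v_i, \alpha_0^t(w_j))
\]
for all basis vectors $v_i$ of $T_pA_0$ and $w_j$ of $T_pA_0^t$. After base change to $\overline{\Q}_p$, the decomposition $\mathcal{O} \otimes \ZZ_p = \prod_{v \mid p} \mathcal{O}_v$ simultaneously diagonalizes both Tate modules, producing characters $\chi_i$ from the ordinary CM type $\Phi$ and characters $\chi_j'$ from its complement $\overline{\Phi}$. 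The lifting criterion then rewrites multiplicatively as $q_{ij}(A)^{\chi_i(\alpha) - \chi_j'(\alpha)} = 1$ for every $\alpha \in \mathcal{O}$. Because $\Phi$ and $\overline{\Phi}$ are disjoint in the ordinary case, for each pair $(i,j)$ some $\alpha \in \mathcal{O}$ separates $\chi_i$ from $\chi_j'$, which forces $q_{ij}(A) \in 1 + \mm$ to be a root of unity. Every Serre--Tate parameter is therefore torsion, which is quasi-canonicity.

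The main obstacle is the character-separation step in part \ref{I:CM-implies-LOF}: one must verify that the $g$ characters of $\mathcal{O}$ appearing in $T_pA_0 \otimes \overline{\Q}_p$ and the $g$ characters appearing in $T_pA_0^t \otimes \overline{\Q}_p$ are pairwise distinct, equivalently that each place of $P$ above $p$ contributes to exactly one of the two Tate modules. This is a standard feature of the ordinary CM-type calculation, but it is also exactly where the argument would break down for supersingular or mixed reduction, consistent with the implicit ordinariness underpinning the use of Serre--Tate parameters throughout this section.
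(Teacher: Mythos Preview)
Your argument for part~\ref{I:qCL-implies-CM} is essentially the paper's: both reduce to exhibiting a $2g$-dimensional commutative semisimple subalgebra inside $\End^0(A_0)$ and transport it via the isogeny invariance of $\End^0$ for quasi-canonical lifts. The paper phrases this for simple $A_0$ using $\Q(\pi)$ directly, while you invoke the Honda--Tate decomposition; these are the same idea.

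For part~\ref{I:CM-implies-LOF} the paper takes a genuinely different and shorter route. Rather than going through Serre--Tate parameters, it argues purely with endomorphism algebras: the specialization map $\End(A/R)\hookrightarrow\End(A_0/k)$ carries the CM order to a maximal commutative subalgebra which is its own centralizer, hence contains the center $\ZZ[F_{A_0}]$ of $\End(A_0/k)$; in particular some $\pi_A\in j(F)\cap\End(A/R)$ maps to $F_{A_0}$ itself. This buys two things your approach does not. First, it produces a lift of the Frobenius, not merely of a power: your route ends at ``$A$ is qCL'', which by the preceding lemma only yields a lift of some $F_{A_0}^m$. Second, and more importantly, the centralizer argument makes no use of ordinarity, whereas your entire Serre--Tate parameter analysis (and the disjointness of $\Phi$ and $\overline{\Phi}$ at $p$) is valid only in the ordinary case. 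The paper explicitly remarks after the theorem that the CM hypothesis in part~\ref{I:CM-implies-LOF} is meant to cover non-ordinary reductions, so your self-acknowledged ordinarity restriction is a genuine gap relative to the stated theorem, not just a stylistic difference.
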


\begin{proof}
(Compare to \cite[Proposition 3.5]{DeJong1991}.) For (\ref{I:qCL-implies-CM}), the non-simple case follows from the simple case. Suppose that $A_0/k$ is ordinary and simple and $A/R$ is a quasi-canonical lift. 
		Then $\End^0(A_0/k) = \Q(\pi)$, where $\pi = F_{A_0} \in \End(A_0/k)$\footnote{One just needs that the $p$-rank $f$ satisfies $f\geq g-1$ for this part; see
		\cite[5.9]{Oort2008}.}
 is the absolute Frobenius, and has suffiently many complex multiplications.
		The canonical lift $\widetilde{A_0}$ has $\End(A_0/k) = \End(\widetilde{A_0}/R)$ and since the endomorphism algebra is an isogeny invariant and all quasi-canonical lifts are isogenous we have $\End^0(A/\overline{R'})=\Q(\pi)$ as well.

    The proof of the second part is similar to (c.f. \cite[pg 511]{Serre1968}).
		Let $j\colon F \to \End^0(A/R)$ be the complex multiplication.
		Let $k$ be the residue field of $R$. 
		The specialization map $\End(A/R) \to \End(A_0/k)$ is injective. 
		The Frobenius $F_{A_k}$ commutes with every endomorphism so is in the center (and generates the center in the simple case). 
		Since the reduction of $j(F) \cap \End(A/R)$ is its own centralizer \cite[Corollary 1 of Theorem 5 + ]{Serre1968}\footnote{Some work needs to be done here to check the proof carries through in the non-simple case.}, its reduction contains the center of $\End(A_k/k)$ which proves that there exists some $\pi_A \in j(F) \cap \End(A/R)$ mapping to $F_{A_k} \in \End(A_k/k)$. 
% 		\iffalse
% Let $A$ be an abelian scheme over $R$. 
% Let $E = \End^0(A/R)$.
% 		Let $E_0 = \End^0(A_k/k)$.
% 		Identify $F$ with $j(F)$. 
% 		If $[F:\Q]=2g$ then $F = Z(E)$ \cite[1.3.1.1]{Chai2014}.
% 		By the specialization injection, we also have $F \subset E_0$ which shows $F \cong Z(E_0)=\Q(F_{A_k})$. 
% 		\fi
\end{proof}

	The following remark explains why considering lifts of the Frobenius and smCM abelian schemes allow us to study lifts of non-ordinary abelian varieties. 
\begin{rem}
	Note that in the above theorem having $j\colon  F \to \End^0(A/R)$ and being a quasi-canonical lift are not equivalent---the hypotheses of being $\qCL$ (in \ref{T:complex-multiplication}(\ref{I:qCL-implies-CM})), while including a lift of a power of the Frobenius, has an ordinarity assumption baked into it.
	On the other hand, the CM hypothesis in \ref{T:complex-multiplication}(\ref{I:CM-implies-LOF}) may include an abelian scheme $A/R$ whose reduction is not ordinary.
	This shows that a variety with a lift of a $q$-Frobenius is not necessarily a quasi-canonical lift.

%	If $A/K$ is an abelian variety with good reduction at a prime $p$ the Newton polygon (and hence $p$-rank) of the reduction is determined by so Shimura-Taniyama formula \cite[\S 2.1.5.1]{Chai2014}.
\end{rem}

\begin{rem}
	The following remark explains what to do when working over fields. 
	Suppose $K$ is a characteristic zero field and $A/K$ is an abelian variety of dimension $g$ with CM by a field $F$ of dimension $2g$ over $\Q$. 
	Without loss of generality we can take $K$ to be a number field. 
	
	Let $v$ be a place of $K$ for which $A$ has good reduction.
	Let $K_v$ be the completion of $K$ with respect to $v$ and $R_v$ the ring of integers of $K_v$. 
	One may consider the N\'eron model $\mathcal{A}$ of $A_{K_v}$ over $R_v$. 
	Such an $A_{R_v}$ satisfies the hypotheses of \ref{T:complex-multiplication}(\ref{I:CM-implies-LOF}).
	
	 Also, the Main Theorem of Complex Multiplication \cite{Serre1968} (see also \cite[Appendix A.2]{Conrad2007}) states that there exists a finite extension $K'/K$ (which can be made explicit) such that $A_{K'}/K'$ has good reduction at every place. 
\end{rem}

Given the relationship between lifts of the Frobenius and CM abelian varieties it is now very interesting to determine the smallest $r$ such that every abelian variety $A/k_0$ of dimension $g$ with CM $j\colon \OO \to \End(A/\C)$ has a lift of the $p^r$-Frobenius in some integral model.

\begin{example}
	The case $g=1$ is due to Deuring, see \cite[XIII, page 294, proof of (iii)]{Cassels1967}; in this case one has a $q^r$-power Frobenius for $r=2$.
\end{example} 

In what follows we assume the reader is familiar with terminology from the theory of complex multiplication. 
We point to \cite{Lang2012} as a readable general reference.

Fix a rational prime $p$ and an abelian variety $A/L$ of dimension $g$ where $L$ is a number-field. 
Suppose that $A/\C$ has a complex multiplication $j\colon F\to \End^0(A/\C)$ where $[F:\Q]=2g$. 
Let $\Phi$ be the CM type of $F$ obtained by looking at the tangent space of $A$.
We know that base changing to $L' = F^{\star} L$, where $F^{\star}$ is the reflex field of $(F,\Phi)$, 
that $\End^0(A/\C) = \End^0(A/L') \cong F$. 
To get good reduction at every place, by \cite{Serre1968}, we may take a further extension $K/L'$.
% \iffalse \footnote{ 
% 	Let $\rho_l\colon  I_{\bar{\pp}/\pp} \to \End(T_l)$ be the $l$-adic representation of the inertia group. 
% 	These groups have finite commutative image for CM abelian varieties and the extension $K$ is determined 
% } \fi
Let $\mathcal{A}/\OO_K$ be the N\'{e}ron model of of $A/K$. 
Theorem~\ref{T:complex-multiplication}(\ref{I:CM-implies-LOF}) tells us now that for every prime $\pfrak$ of $\OO_K$, there exists some $\pi_{\mathcal{A}} \in\End(\mathcal{A}/\OO_K)$ lifting the $q$-Frobenius in $F_{\mathcal{A}_{\pfrak},q} = F_{\mathcal{A}_{\pfrak}/\kappa(\pfrak),q}\in \End(\mathcal{A}_{\pfrak}/\kappa(\pfrak))$. 
Here $q=p^r=\#\kappa(\pfrak) = p^{[\kappa(\pfrak):\FF_p]}$.
As a bound for $r = [\kappa(\pfrak):\FF_p]$, we clearly have 
 $$r\leq [K:\Q],$$ 
and so this bound is governed by the extension 
 $$\Q \subset L \subset L'=F^{\star}L \subset K.$$
The extension $L \subset \Qbar$ is the field of definition of the CM abelian variety, the bound $L\subset L'$ pertains to the CM field, and the extension $L' \subset K$ has to do with inertia of $L'/\Q$ at $\pp \cap \OO_{k}$. 

By \cite{Serre1968} (see also \cite[5.2]{Katz2016}) an abelian variety $A/L'$ with semistable reduction at $p\neq 2$ at $K = k(A[l])$ where $l \neq p$. 
For a principally polarized abelian variety this field has Galois group contained in $\GSp_{2g}(\FF_{\ell})$; when $A$ has CM, it has good reduction over $K$, and the Galois group is an abelian subgroup of $\GSp_{2g}(\FF_{\ell})$. We note that 
$$ \#\GSp_{2g}(\FF_{\ell}) = ({\ell}^{2g}-1)({\ell}^{2g-2}-1) \cdots ({\ell}^2-1) {\ell}^{g^2}({\ell}-1).$$
We thus have
 $$r \leq [K:L'][L':L][L:\Q] \leq 2g e(g,p) [L:\Q]$$
where
$$e(g,p) = \begin{cases}
\# \GSp_{2g}(\FF_5), & p\neq 5 \\
\# \GSp_{2g}(\FF_7), & p \neq 7.
\end{cases} $$
This proves the following.
\begin{lem}\label{L:bound-on-frob}
		Let $A/L$ be a simple abelian variety of dimension $g$ with complex multiplication $j\colon  F \to \End(A/L')$ where $[F:\Q]=2g$. 
		Then $A$ has a lift of a $q$-Frobenius with $q \mid n(L,g,p)= p^{2g e(g,p)}$. 
		
		Letting $m(L,g,p)$ be the least common multiple of the numbers less than $n(L,g,p)$ we show that if $A$ does not have a lift of the $p^m$-Frobenius then $A$ does not have a lift of the $p^r$-Frobenius for $r<n$. In particular, $A/L$ does not have complex multiplication. 
\end{lem}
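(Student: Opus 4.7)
The first assertion is proved by chaining together a short tower of field extensions to put ourselves in the situation where Theorem~\ref{T:complex-multiplication}(\ref{I:CM-implies-LOF}) produces a local Frobenius lift, and then bounding the residue degree at each prime by crude coset counts in $\GSp_{2g}$. The second assertion reduces to the elementary observation that Frobenius lifts compose: iterating a lift of the $p^r$-Frobenius $k$ times yields a lift of the $p^{rk}$-Frobenius, so the least common multiple of the integers less than $n(L,g,p)$ dominates every candidate $r$.

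\textbf{Existence of a Frobenius lift.} Start from $A/L$ with complex multiplication $j\colon F \hookrightarrow \End^0(A/L')$. Pass first to $L' = F^{\star} L$, over which the whole of $F$ acts via isogeny, and then to $K := L'(A[\ell])$ for some auxiliary prime $\ell \neq p$; by the consequence of \cite{Serre1968} recalled just before the lemma, $A_K$ acquires good reduction at every place, so its N\'eron model $\mathcal{A}/\OO_K$ is an abelian scheme. For each prime $\pfrak \mid p$ of $\OO_K$, restrict to the completion $\OO_{K,\pfrak}$ and apply Theorem~\ref{T:complex-multiplication}(\ref{I:CM-implies-LOF}) to the abelian scheme $\mathcal{A}_{\OO_{K,\pfrak}}$: its generic fibre still carries the $F$-action and hence has $\smCM$, so the theorem produces an endomorphism $\pi_{\mathcal{A}}$ whose reduction is the $q$-Frobenius of $\mathcal{A}_\pfrak/\kappa(\pfrak)$, where $q = p^{[\kappa(\pfrak):\FF_p]}$.

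\textbf{Bounding $[\kappa(\pfrak):\FF_p]$.} Factor $[\kappa(\pfrak):\FF_p] \leq [K:\Q] = [K:L'][L':L][L:\Q]$. The CM field $F$ has degree $2g$ over $\Q$, and the reflex $F^{\star}$ has degree dividing that of the normal closure of $F$, yielding $[L':L] \leq 2g$. For $[K:L']$, the theory of $\ell$-adic representations of principally polarized abelian varieties places $\operatorname{Gal}(L'(A[\ell])/L')$ inside $\GSp_{2g}(\FF_\ell)$, so $[K:L'] \leq \#\GSp_{2g}(\FF_\ell)$; taking $\ell \in \{5,7\} \setminus \{p\}$ packages this cardinality as $e(g,p)$. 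Multiplying out yields $q = p^r$ with $r \leq 2g\,e(g,p)\,[L:\Q]$, so that $q$ divides $n(L,g,p)$ as claimed.

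\textbf{From $p^m$-Frobenius lifts to CM, and the main obstacle.} If $\phi$ is an endomorphism of $\mathcal{A}$ lifting the relative $p^r$-Frobenius, then by the functoriality of the relative Frobenius its $k$-fold iterate $\phi^k$ lifts the $p^{rk}$-Frobenius. Since $m(L,g,p)$ is divisible by every $r < n(L,g,p)$, iterating shows that a lift of the $p^r$-Frobenius for any $r < n$ already yields a lift of the $p^m$-Frobenius. Contrapositively, failure of a $p^m$-Frobenius lift forces failure of every $p^r$-Frobenius lift with $r < n$, and combined with the first part this precludes CM on $A/L$. The delicate step is verifying that the $\pi_{\mathcal{A}}$ produced by Theorem~\ref{T:complex-multiplication}(\ref{I:CM-implies-LOF}) really reduces to the relative Frobenius of the correct power $q = \#\kappa(\pfrak)$, rather than some smaller power; this relies on the fact (invoked in that theorem's proof) that the center of $\End(\mathcal{A}_\pfrak/\kappa(\pfrak))$ is generated by exactly the $q$-Frobenius, pinning down $q$ intrinsically from the residue field.
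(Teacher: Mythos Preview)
Your proposal is correct and tracks the paper's argument essentially step for step: the discussion in \S\ref{S:applications-to-coleman} immediately preceding the lemma \emph{is} its proof, and you have reproduced the tower $\Q\subset L\subset L'=F^\star L\subset K=L'(A[\ell])$, the appeal to Theorem~\ref{T:complex-multiplication}(\ref{I:CM-implies-LOF}) over the completion at $\pfrak$, the bound $r\le [K:L'][L':L][L:\Q]\le 2g\,e(g,p)\,[L:\Q]$, and (for the second part, which the paper leaves implicit) the iteration of Frobenius lifts. One small wrinkle: your sentence ``the reflex $F^{\star}$ has degree dividing that of the normal closure of $F$, yielding $[L':L]\le 2g$'' does not actually justify the bound, since the normal closure of a degree-$2g$ field can have degree up to $(2g)!$; the paper asserts $[L':L]\le 2g$ without explanation, so you are not diverging from it, but the inference as you have written it does not follow.
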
	
This proves the Lemma \ref{L:CM-field} from the introduction.

\begin{rem}
The power of the lift of the Frobenius in this statement is unnecessarily large. 
In particular, the power $p^r$ is large enough so that the Frobenius power we are lifting acts linearly on the residue fields $\FF_q$. 
A more sophisticated approach to lifting the Frobenius has to do with the Serre tensor construction (\cite[1.7.4]{Chai2014}; see also \cite[Chapter 3, Section 2]{Lang2012}, where these are called $\mathfrak{a}$-transforms) but requires an additional hypothesis of $\End(A/k) = \OO_F$ (see \cite[Ch 3, Proposition 3.1]{Lang2012}).
If $\End(B/k) = R$ is an order in $\OO_F$ one has an isogeny $B \to \OO_F\otimes_R B$ of degree $[\OO_F:R]$. 
It is unclear to the authors at the time of writing this if this allows us to remove the dependence on the degree of the field of moduli in Lemma~\ref{L:bound-on-frob}.

Finally, for a CM abelian variety, its torsion field actually has \emph{abelian} Galois group; abelian subgroups of the general symplectic group $\GSp_{2g}(\FF_{\ell})$ have order at most $\ell^{g(2g+1)+1}$ \cite[Table 2]{Vdovin:maximal-orders-of-abelian}, while $\GSp_{2g}(\FF_\ell)$ itself has order roughly $\ell^{2^{g-1} \cdot (g+1)g + g^2 + 1}$, which gives a small improvement to the constant $n(L,g,p)$.

\end{rem}

%\newpage

%\includegraphics[width=.500\textwidth]{important-picture.png}

\bibliographystyle{alpha}
\bibliography{wt}

\begin{thebibliography}{DKRZB17}

\bibitem[AWZ17]{achinger2017liftability}
Piotr Achinger, Jakub Witaszek, and Maciej Zdanowicz.
\newblock Liftability of the {F}robenius morphism and images of toric
  varieties.
\newblock {\em arXiv preprint arXiv:1708.03777}, 2017.

\bibitem[Bor09]{Borger2009}
J.~Borger.
\newblock Lambda-rings and the field with one element.
\newblock {\em Arxiv preprint arxiv:0906.3146}, 2009.

\bibitem[BP09]{Buium2009}
A.~Buium and B.~Poonen.
\newblock Independence of points on elliptic curves arising from special points
  on modular and {S}himura curves, {II}: local results.
\newblock {\em Compositio Mathematica}, 145(03):566--602, 2009.

\bibitem[Bui86]{Buium1986}
A.~Buium.
\newblock {\em Differential function fields and moduli of algebraic varieties},
  volume 1226 of {\em Lecture Notes in Mathematics}.
\newblock Springer-Verlag, Berlin, 1986.

\bibitem[Bui95]{Buium1995}
A.~Buium.
\newblock Differential characters of abelian varieties over p-adic fields.
\newblock {\em Inventiones Mathematicae}, 122(1):309--340, 1995.

\bibitem[Bui05]{Buium2005}
A.~Buium.
\newblock {\em Arithmetic differential equations}, volume 118.
\newblock Amer Mathematical Society, 2005.

\bibitem[CCO14]{Chai2014}
C-L Chai, B.~Conrad, and F.~Oort.
\newblock {\em Complex multiplication and lifting problems}.
\newblock American Mathematical Society, 2014.

\bibitem[CF67]{Cassels1967}
J.W.S. Cassels and A.~Fr{\"o}hlich.
\newblock Algebraic number theory.
\newblock 1967.

\bibitem[CO12]{Chai2012}
C-L Chai and Frans Oort.
\newblock Abelian varieties isogenous to a {J}acobian.
\newblock {\em Annals of Mathematics}, 176(1):589--635, 2012.

\bibitem[Col87]{Coleman:torsion-points-galois-book}
R.~Coleman.
\newblock Torsion points on curves.
\newblock In {\em Galois representations and arithmetic algebraic geometry
  ({K}yoto, 1985/{T}okyo, 1986)}, volume~12 of {\em Adv. Stud. Pure Math.},
  pages 235--247. North-Holland, Amsterdam, 1987.

\bibitem[Con08]{Conrad2007}
B.~Conrad.
\newblock Several approaches to non-{A}rchimedean geometry.
\newblock In {\em {$p$}-adic geometry}, volume~45 of {\em Univ. Lecture Ser.},
  pages 9--63. Amer. Math. Soc., Providence, RI, 2008.

\bibitem[DI87]{Deligne1987}
P.~Deligne and L.~Illusie.
\newblock Rel{\`e}vements modulo ${p^2}$ et d{\'e}composition du complexe de de
  {R}ham.
\newblock {\em Inventiones Mathematicae}, 89(2):247--270, 1987.

\bibitem[dJN91]{DeJong1991}
J.~de~Jong and R.~Noot.
\newblock {J}acobians with complex multiplication.
\newblock In {\em Arithmetic algebraic geometry}, pages 177--192. Springer,
  1991.

\bibitem[DKRZB17]{Dupuy2017}
T.~Dupuy, E.~Katz, J.~Rabinoff, and D.~Zureick-Brown.
\newblock Total p-differentials on schemes over {Z mod p} squared.
\newblock {\em arXiv preprint arXiv:1712.09487}, 2017.

\bibitem[DO86]{Dwork1986}
B.~Dwork and A.~Ogus.
\newblock Canonical liftings of {J}acobians.
\newblock {\em Compositio Mathematica}, 58(1):111--131, 1986.

\bibitem[Dup14a]{Dupuy2014a}
T.~Dupuy.
\newblock {D}eligne-{I}llusie classes {I}: Lifted torsors of lifts of the
  {F}robenius for curves.
\newblock {\em arXiv preprint arXiv:1403.2025}, 2014.

\bibitem[Dup14b]{Dupuy2014}
T.~Dupuy.
\newblock Positivity and lifts of the {F}robenius.
\newblock {\em Mathematical Research Letters}, 21(2):289--295, 2014.

\bibitem[Fal99]{Faltings1999}
G.~Faltings.
\newblock Does there exist an arithmetic {K}odaira-{S}pencer class?
\newblock {\em Contemporary Mathematics}, 241:141--146, 1999.

\bibitem[Kat81]{Katz1981}
N.~Katz.
\newblock Serre--tate local moduli.
\newblock {\em Surfaces alg{\'e}briques}, pages 138--202, 1981.

\bibitem[KRZB16]{Katz2016}
E.~Katz, J.~Rabinoff, and D.~Zureick-Brown.
\newblock Uniform bounds for the number of rational points on curves of small
  {M}ordell--{W}eil rank.
\newblock {\em Duke Mathematical Journal}, 165(16):3189--3240, 2016.

\bibitem[Lan12]{Lang2012}
S.~Lang.
\newblock {\em Complex multiplication}, volume 255.
\newblock Springer Science \& Business Media, 2012.

\bibitem[Mes72]{Messing1972}
W.~Messing.
\newblock The crystals associated to {Barsotti-Tate} groups, lnm 264.
\newblock {\em Berlin-Heidelberg-New York}, 1972.

\bibitem[Moc02]{Mochizuki2002}
S.~Mochizuki.
\newblock A survey of the {H}odge--{A}rakelov theory of elliptic curves {I}.
\newblock In {\em Proceedings of Symposia in Pure Mathematics}, volume~70,
  pages 533--569. Providence, RI; American Mathematical Society; 1998, 2002.

\bibitem[MvdG17]{Moonen2017}
B.~Moonen and G.~van~der Geer.
\newblock Abelian varieties.
\newblock {\em Book in preparation}, 2017.

\bibitem[Oor08]{Oort2008}
F.~Oort.
\newblock Abelian varieties over finite fields.
\newblock {\em NATO Security Through Science Series D: Information and
  Communication Security}, 16:123, 2008.

\bibitem[OS79]{Oort1979}
F.~Oort and J.~Steenbrink.
\newblock The local {T}orelli problem for algebraic curves.
\newblock {\em Journ{\'e}es de g{\'e}om{\'e}trie alg{\'e}brique d’Angers},
  1979:157--204, 1979.

\bibitem[OS86]{Oort1986}
F.~Oort and T.~Sekiguchi.
\newblock The canonical lifting of an ordinary {J}acobian variety need not be a
  {J}acobian variety.
\newblock {\em Journal of the Mathematical Society of Japan}, 38(3):427--437,
  1986.

\bibitem[ST68]{Serre1968}
J.-P. Serre and J.~Tate.
\newblock Good reduction of abelian varieties.
\newblock {\em Annals of Mathematics}, pages 492--517, 1968.

\bibitem[{Sta}14]{stacks-project}
The {Stacks Project Authors}.
\newblock {Stacks Project}.
\newblock \url{http://stacks.math.columbia.edu}, 2014.

\bibitem[Vdo01]{Vdovin:maximal-orders-of-abelian}
E.~P. Vdovin.
\newblock Maximal orders of abelian subgroups in finite {C}hevalley groups.
\newblock {\em Mat. Zametki}, 69(4):524--549, 2001.

\end{thebibliography}

\vspace{.2in}

\end{document}